\newcommand{\R}{\mathbb{R}}
\newtheorem{theorem}{Theorem}
\newtheorem{lemma}[theorem]{Lemma}
\newtheorem{question}[theorem]{Question}
\newtheorem{proposition}[theorem]{Proposition}
\newtheorem{remark}[theorem]{Remark}
\newcommand*\diff{\mathop{}\!\mathrm{d}}
\begin{document}
\title{Dimension independent Bernstein--Markov inequalities\\ in Gauss space}
\author{Alexandros Eskenazis}
\author{Paata Ivanisvili}
\thanks{This work was carried out under the auspices of the Simons Algorithms and Geometry (A\&G) Think Tank.}
\address{Department of Mathematics, Princeton University}
\email{ ae3@math.princeton.edu \textrm{(A.\ Eskenazis)}}

\address{Department of Mathematics, Princeton University; UC Irvine. }
\email{paatai@math.princeton.edu \textrm{(P.\ Ivanisvili)}}


\begin{abstract} 
We obtain the following dimension independent Bernstein--Markov inequality in Gauss space: for each $1\leq p<\infty$ there exists a constant $C_p>0$ such that for any $k\geq 1$ and all polynomials $P$ on $\mathbb{R}^{k}$ we have 
$$
\| \nabla P\|_{L^{p}(\mathbb{R}^{k}, \diff\gamma_k)} \leq C_p (\mathrm{deg}\, P)^{\frac{1}{2}+\frac{1}{\pi}\arctan\left(\frac{|p-2|}{2\sqrt{p-1}}\right)}\|P\|_{L^{p}(\mathbb{R}^{k}, \diff\gamma_k)}, 
$$
where $\diff\gamma_k$ is the standard Gaussian measure on $\mathbb{R}^{k}$. We also show that under some mild growth assumptions on any  function $B \in C^{2}((0,\infty))\cap C([0,\infty))$ with $B', B''>0$ we have 
$$
\int_{\mathbb{R}^{k}} B\left( |LP(x)|\right) \diff\gamma_k(x) \leq \int_{\mathbb{R}^{k}} B\left( 10 (\mathrm{deg}P)^{\alpha_{B}}|P(x)|\right)\diff\gamma_k(x)
$$
where $L=\Delta-x\cdot \nabla $ is the generator of the Ornstein--Uhlenbeck semigroup and 
$$
\alpha_{B} =1+\frac{2}{\pi} \arctan\left(\frac{1}{2}\sqrt{\sup_{s \in (0,\infty)}\left\{\frac{sB''(s)}{B'(s)}+\frac{B'(s)}{sB''(s)}\right\}-2}\right).
$$
\end{abstract}
\maketitle 
{\footnotesize
\noindent {\em 2010 Mathematics Subject Classification.} Primary: 41A17; Secondary: 41A63, 42C10, 28C20.

\noindent {\em Key words.} Gaussian measure, Bernstein-Markov inequality, Freud's inequality, weighted approximation.}

\section{Introduction}
Let $\diff\gamma_{k}(x)$ be the standard Gaussian measure on $\mathbb{R}^{k}$,  given by
\begin{align*}
\diff\gamma_{k}(x) = \frac{e^{-|x|^{2}/2}}{\sqrt{(2\pi)^{k}}} \diff x,
\end{align*}
where $|x| = \sqrt{x_1^2+\cdots+x_k^2}$ is the Euclidean length of $x=(x_1,\ldots,x_k)\in\mathbb{R}^k$. Here and throughout, we will denote by $\varphi_k$ the density of the Gaussian measure $\diff\gamma_k$ with respect to the Lebesgue measure on $\R^k$. For $1\leq p < \infty$, define $L^{p}(\mathbb{R}^{k}, \diff\gamma_k)$ to be the space of those measurable functions on $\mathbb{R}^{k}$ for which 
\begin{align*}
\|f\|_{L^{p}(\mathbb{R}^{k}, \diff\gamma_k)} :=\Big(\int_{\mathbb{R}^{k}} |f|^{p} \diff\gamma _k\Big)^{\frac{1}{p}}<\infty. 
\end{align*}
As usual, $L^\infty(\mathbb{R}^k,\diff\gamma_k)$ is defined by the condition $\|f\|_{L^{\infty}(\mathbb{R}^{k}, \diff\gamma_k)} = \mathrm{esssup}_{x \in \mathbb{R}^{k}} |f(x)| <\infty$. For convenience of notation, we will abbreviate $\|f\|_{L^{p}(\mathbb{R}^{k}, \diff\gamma_k)}$ as $\|f\|_{L^{p}(\diff\gamma_k)}$.

\subsection{Freud's inequality in high dimensions}

In his seminal paper \cite{Fr1}, Freud obtained the following weighted Bernstein--Markov type inequality on the real line.
\begin{theorem}[Freud's inequality, \cite{Fr1}] \label{thm:Freudold}
There exists a universal constant $C>0$ such that for any $1\leq p \leq \infty$ and all polynomials $P$ on $\mathbb{R}$, we have 
\begin{align}\label{froid}
\left(\int_{\mathbb{R}} | P'(x) \varphi_1(x)|^{p} \diff x\right)^{1/p} \leq C \sqrt{\mathrm{deg}P} \left( \int_{\mathbb{R}} | P(x) \varphi_1(x)|^{p} \diff x\right)^{1/p}.
\end{align}
\end{theorem}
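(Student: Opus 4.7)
The strategy is to reduce Freud's inequality to a uniform weighted Bernstein--Markov estimate on the Mhaskar--Rakhmanov--Saff (MRS) interval $I_n := [-c\sqrt{n}, c\sqrt{n}]$, where $n = \mathrm{deg}\, P$. Mhaskar's inequality $\|P\varphi_1\|_{L^p(\mathbb{R})} \leq 2\|P\varphi_1\|_{L^p(I_n)}$ localizes the problem, and the key task is then to prove a sharp pointwise estimate of the form $|P'(x) \varphi_1(x)| \lesssim \sqrt{n}\,\|P\varphi_1\|_{L^\infty(I_n)}$ for $x \in I_n$.

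I would begin with the cleanest case, $p = 2$. After the change of variables $y = \sqrt{2}\,x$, which converts $\|\cdot\,\varphi_1\|_{L^2(\mathbb{R},\diff x)}$ into a multiple of $\|\cdot\|_{L^2(\diff\gamma_1)}$ (with $Q(y) := P(y/\sqrt{2})$ having the same degree as $P$), the inequality reduces to $\|Q'\|_{L^2(\diff\gamma_1)} \leq \sqrt{n}\,\|Q\|_{L^2(\diff\gamma_1)}$. Let $\{H_k\}_{k \geq 0}$ denote the probabilists' Hermite polynomials, which form an orthogonal basis of $L^2(\diff\gamma_1)$ with $\|H_k\|_{L^2(\diff\gamma_1)}^2 = k!$ and satisfy $H_k' = k\,H_{k-1}$. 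Expanding $Q = \sum_{k=0}^n a_k H_k/\sqrt{k!}$, Parseval gives $\|Q\|_{L^2(\diff\gamma_1)}^2 = \sum a_k^2$ while $\|Q'\|_{L^2(\diff\gamma_1)}^2 = \sum_{k=1}^n k\, a_k^2 \leq n\,\|Q\|_{L^2(\diff\gamma_1)}^2$, yielding the result for $p = 2$ with constant $1$.

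For general $p \in [1,\infty]$, I would couple Mhaskar's inequality with the sharp weighted Markov--Bernstein estimate on $I_n$. The interior region $|x| \leq c\sqrt{n}/2$ is handled by the classical Bernstein inequality $|P'(x)| \leq n\,\|P\|_{L^\infty(I_n)}/\sqrt{c^2 n - x^2} \lesssim \sqrt{n}\,\|P\|_{L^\infty(I_n)}$, where the denominator is bounded away from zero. Near the endpoints of $I_n$, one employs a Christoffel function / Markov-type argument adapted to the Freud weight $\varphi_1$, which delivers the pointwise bound $|P'(x)\varphi_1(x)| \lesssim \sqrt{n}\,\|P\varphi_1\|_{L^\infty(I_n)}$ uniformly across $I_n$ (provided the MRS constant $c$ is chosen suitably). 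Once this pointwise estimate is in hand, passing to $L^p$ is routine: raising to the $p$-th power and integrating against $\varphi_1^p$ over $I_n$ together with Mhaskar's localization delivers the required inequality simultaneously for all $p$.

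The principal technical obstacle is the uniform weighted Markov--Bernstein estimate on $I_n$, especially at its endpoints. Bernstein's inequality degenerates as $x \to \pm c\sqrt{n}$, and a naive combination of the classical Markov bound (which loses a factor of $n$) with the exponential decay of $\varphi_1$ fails to produce the clean $\sqrt{n}$ scaling, because the polynomial growth of $P$ near the endpoints can be as large as $e^{c^2 n/2}$ relative to its weighted norm. The correct treatment requires either Christoffel function estimates for the Freud weight $e^{-x^2/2}$ or an equivalent construction (such as an appropriately peaked polynomial test function at the endpoints) that demonstrates the sharp $\sqrt{n}$ scaling at the boundary of $I_n$. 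This endpoint step, originally established by Freud, is the crux of the argument; once it is in place, the inequality follows uniformly in $p \in [1,\infty]$ without any further interpolation.
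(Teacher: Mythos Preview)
The paper does not prove Theorem~\ref{thm:Freudold}; it is stated there as a classical result and attributed to Freud~\cite{Fr1}, with no argument given. So there is no ``paper's own proof'' to compare against. What the paper \emph{does} prove is the multidimensional $L^\infty$ case, Proposition~\ref{best}, using the Freud--Nevai device of replacing the weight $W_1(x)=e^{-x^2}$ by a polynomial $S_n$ of degree $\asymp n$ satisfying $S_n\asymp W_1$ and $|S_n'|\lesssim\sqrt{n}\,W_1$ on the MRS interval, together with the restricted range inequality and the unweighted Bernstein inequality on a ball. That argument is in the same circle of ideas as your sketch but is much more concrete than ``a Christoffel function / Markov-type argument.''

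That said, your outline has a genuine gap in the passage from $p=\infty$ to general $p$. You establish the pointwise bound
\[
|P'(x)\varphi_1(x)|\;\lesssim\;\sqrt{n}\,\|P\varphi_1\|_{L^\infty(I_n)},\qquad x\in I_n,
\]
and then write that ``raising to the $p$-th power and integrating against $\varphi_1^p$ over $I_n$ together with Mhaskar's localization delivers the required inequality simultaneously for all $p$.'' It does not: integrating the displayed bound over $I_n$ only yields
\[
\|P'\varphi_1\|_{L^p(I_n)}\;\lesssim\;\sqrt{n}\,|I_n|^{1/p}\,\|P\varphi_1\|_{L^\infty(I_n)},
\]
with the $L^\infty$ norm, not the $L^p$ norm, on the right. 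Converting $\|P\varphi_1\|_{L^\infty}$ back to $\|P\varphi_1\|_{L^p}$ costs a Nikolskii factor that does not cancel the $|I_n|^{1/p}\asymp n^{1/(2p)}$ you have introduced. A correct route to the $L^p$ case requires either a \emph{local} estimate of the form $|P'(x)\varphi_1(x)|\lesssim\sqrt{n}$ times a local average of $|P\varphi_1|$ (so that integration in $x$ reproduces $\|P\varphi_1\|_{L^p}$), or the Freud--Nevai trick above applied directly in $L^p$: write $P'\varphi_1\approx P'\,S_n$, use the product rule to get $(PS_n)'-P\,S_n'$, bound $(PS_n)'$ by the unweighted $L^p$ Markov--Bernstein inequality on $I_n$ (which gives the correct $\sqrt{n}$ because $|I_n|\asymp\sqrt{n}$ and $\deg(PS_n)\asymp n$), and bound $P\,S_n'$ using $|S_n'|\lesssim\sqrt{n}\,W_1$. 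Your $p=2$ argument via the Hermite basis is fine and matches the remark in the paper.
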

After making a change of variables in (\ref{froid}), Freud's inequality can be rewritten in terms of $\|\cdot \|_{L^{p}(\diff\gamma_1)}$ norms as 
\begin{align} \label{gaza1}
\|P'\|_{L^{p}(\diff\gamma_1)} \leq C \sqrt{\frac{\mathrm{deg} P}{p}} \| P\|_{L^{p}(\diff\gamma_1)},
\end{align}
for all $1\leq p<\infty$. Notice that (\ref{gaza1}) breaks down for $p=\infty$ as $\|P\|_{L^\infty(\diff\gamma_1)}=\infty$ for every non-constant polynomial $P$, nevertheless inequality (\ref{froid}) still persists. 

After proving Theorem \ref{thm:Freudold}, Freud~\cite{Fr2} extended his Gaussian estimates \eqref{froid} to more general weights $e^{-Q(x)}$ on the real line, nowadays known as {\em Freud weights}, where the function $Q(x)$ satisfies certain growth and convexity assumptions. In this case, the bound $\sqrt{\mathrm{deg}P}$ in (\ref{froid}) is replaced by a certain quantity which depends on the so-called  {\em Mhaskar--Rakhmanov--Saff numbers} of the weight $e^{-Q(x)}$. Since the works \cite{Fr1,Fr2} of Freud, several different proofs of such one-dimensional weighted Bernstein--Markov inequalities have been found (see, e.g., \cite{FrNev,NevTot,LevLub,LevLub2,LevLub3,KrooSza}), in part due to important implications of such estimates in approximation theory (see, e.g., \cite[Theorem~2]{Fr1} and \cite[Theorems 4.1 and 5.1]{Fr2}). We refer the reader to the beautiful survey \cite{Lub1} of Lubinsky for a detailed exposition of results on this subject. 

In relation to the ``heat smoothing conjecture'' \cite{MN1} one can ask if a dimension independent discrete counterpart of Freud's inequality holds on the Hamming cube $\{-1,1\}^{n}$ equipped with uniform counting measure~\cite{EI}. A positive answer by central limit theorem would imply the validity of Freud's inequality in $L^{p}(\mathbb{R}^{k}, \diff\gamma_{k})$ with constants independent of $k$. Therefore, it is of interest first to understand if Freud's inequality can be extended to higher dimensions with a dimensionless constant.   Throughout the ensuing discussion, for a smooth function $f:\mathbb{R}^k\to \mathbb{R}$ and $0<p<\infty$, we will denote
\begin{equation}
\|\nabla f\|_{L^p(\diff\gamma_k)} := \Big( \int_{\mathbb{R}^k} \Big(\sum_{j=1}^k (\partial_j f)^2(x) \Big)^{p/2} \diff\gamma_k(x)\Big)^{1/p}.
\end{equation}
We first notice (see also Section \ref{mult}) that the case $p=\infty$ of Freud's inequality \eqref{froid} easily extends in all $\mathbb{R}^k$ with a constant independent of the dimension.

\begin{proposition}\label{best}
There exists a universal constant $C>0$ such that for any $k\geq 1$, and all polynomials $P$ on $\mathbb{R}^{k}$ we have 
\begin{align}
\|\varphi_k \nabla P \|_{L^\infty(\mathbb{R}^k)} \leq C \sqrt{\mathrm{deg}\, P}\,  \|\varphi_k P \|_{L^\infty(\mathbb{R}^k)},
\end{align}
where $\mathrm{deg}\, P$ denotes the total degree of the multivariate polynomial $P$. 
\end{proposition}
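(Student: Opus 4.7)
The plan is to reduce Proposition~\ref{best} to the one-dimensional Freud inequality \eqref{froid} with $p=\infty$ by restricting $P$ to suitably chosen affine lines in $\mathbb{R}^k$ and exploiting the rotational invariance of the Gaussian density. Fix $x\in\mathbb{R}^k$ with $\nabla P(x)\neq 0$ (otherwise there is nothing to prove at $x$) and set $v:=\nabla P(x)/|\nabla P(x)|\in\mathbb{R}^k$, a unit vector. Write $x=x_\parallel v + x_\perp$ with $x_\parallel:=\langle x,v\rangle\in\mathbb{R}$ and $x_\perp:=x-x_\parallel v\perp v$, and consider the univariate polynomial
\[
R(s) := P(x_\perp + sv), \qquad s\in\mathbb{R},
\]
which has $\deg R\leq n:=\deg P$ and satisfies $R'(x_\parallel)=\langle\nabla P(x),v\rangle=|\nabla P(x)|$.

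The key observation is the factorization of the Gaussian density along the line $\{x_\perp+sv:s\in\mathbb{R}\}$: since $v\perp x_\perp$, we have $|x_\perp+sv|^2=|x_\perp|^2+s^2$, and hence
\[
\varphi_k(x_\perp+sv) \;=\; (2\pi)^{-(k-1)/2}e^{-|x_\perp|^2/2}\,\varphi_1(s) \;=:\; c(x_\perp)\,\varphi_1(s).
\]
Applying the $p=\infty$ case of Theorem~\ref{thm:Freudold} to $R$ and multiplying both sides by the $s$-independent constant $c(x_\perp)$ yields
\[
\sup_{s\in\mathbb{R}}|R'(s)|\,\varphi_k(x_\perp+sv) \;\leq\; C\sqrt{n}\,\sup_{s\in\mathbb{R}}|R(s)|\,\varphi_k(x_\perp+sv).
\]
Evaluating the left side at $s=x_\parallel$ gives $|\nabla P(x)|\varphi_k(x)$, while the right side is bounded by $C\sqrt{n}\,\|\varphi_k P\|_{L^\infty(\mathbb{R}^k)}$ because the line $\{x_\perp+sv:s\in\mathbb{R}\}$ lies in $\mathbb{R}^k$. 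Taking the supremum over $x$ then completes the proof.

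There is no serious obstacle: the argument hinges solely on the fact that the standard Gaussian density is rotationally invariant and splits as a product of lower-dimensional Gaussian factors along any line, so the one-dimensional inequality lifts directly with no dimensional penalty. This is precisely why the $L^\infty$ case is so much easier than its $L^p$ counterpart for finite $p$ ($p\neq 2$), where the orthogonal Gaussian factor does not drop out cleanly from an integrated bound and a genuinely multivariate argument is required.
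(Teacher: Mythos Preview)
Your argument is correct and considerably more direct than the route taken in the paper. The paper's proof (Section~\ref{mult}) adapts the classical Freud--Nevai machinery: it constructs polynomial approximants $\rho_n(x)=S_n(|x|)$ to the weight $W_k(x)=e^{-|x|^2}$ satisfying \eqref{raz}--\eqref{dva}, invokes the Levin--Lubinsky restricted range inequality \eqref{mnogom} to localize to the ball $B(a_n)$, replaces $W_k$ by $\rho_n$ there, and then applies Harris's multidimensional Bernstein inequality \eqref{harris} to the polynomial $P\rho_n$ on a ball. Your proof bypasses all of this by restricting $P$ to the single line through $x$ in the direction $\nabla P(x)/|\nabla P(x)|$; the rotational symmetry of $\varphi_k$ factors the Gaussian along the line as $c(x_\perp)\varphi_1(s)$, and the one-dimensional case of Theorem~\ref{thm:Freudold} finishes the job with no auxiliary constructions.

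What each approach buys: your reduction is shorter, uses only the one-dimensional Freud inequality as a black box, and makes transparent why the $L^\infty$ case is special (the orthogonal factor $c(x_\perp)$ cancels from both sides, which fails for $L^p$ integrals). The paper's approach, while heavier, is a direct multivariate version of the standard weighted-approximation toolkit and would in principle adapt to more general radial weights $e^{-Q(|x|)}$ for which a line-restriction trick would still work but where one might also want the intermediate infrastructure (restricted range, polynomial approximants) for other purposes.
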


For finite values of $p$, the following question naturally arises, in analogy to \eqref{gaza1}.

\begin{question}[Bernstein--Markov inequality in Gauss space] \label{question}
 Is it true that for each $1\leq p<\infty$  there exists a constant $C_p>0$ such that for any integer $k\geq 1$, and all polynomials $P$ on $\mathbb{R}^{k}$ the the dimension independent Gaussian Bernstein--Markov inequality
\begin{align}\label{markg}
\|\nabla P\|_{L^{p}(\diff\gamma_k)} \leq C_p \sqrt{\mathrm{deg}\, P} \|P\|_{L^{p}(\diff\gamma_k)}
\end{align}
holds true?
\end{question}

\begin{remark}
Using \eqref{gaza1}, it is straightforward to obtain \eqref{markg} with a dimension dependent constant $C_{p,k}$. Also, inequality \eqref{markg} can easily be proven for $p=2$ (and $C_2=1$) by expanding $P$ in the Hermite basis and using orthogonality.
\end{remark}


Before moving to our main result, we mention that an elegant argument of Maurey and Pisier from \cite{MP1}, implies a weakening of Question \ref{question} with a (suboptimal) linear bound on ${\mathrm{deg}\, P}$.

\begin{proposition}\label{rot1}
There exists a universal constant $C>0$ such that for any $k\geq 1$, any $ 0< p < \infty$  and  all polynomials $P$ on $\mathbb{R}^{k}$, we have 
\begin{align}\label{trick1}
\| \nabla P\|_{L^{p}(\diff\gamma_k)} \leq C \frac{\mathrm{deg}\, P}{\sqrt{p+1}} \|P\|_{L^{p}(\diff\gamma_k)}.
\end{align}
\end{proposition}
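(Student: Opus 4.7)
The plan is to execute the Maurey--Pisier rotational trick. Let $X,Y$ be independent standard Gaussian vectors in $\mathbb{R}^k$. By the rotational invariance of Gaussian measure, for every fixed $\theta\in\mathbb{R}$ we have
\begin{equation*}
(X\cos\theta+Y\sin\theta,\, -X\sin\theta+Y\cos\theta) \stackrel{d}{=} (X,Y).
\end{equation*}
For a polynomial $P$ of degree $n$, define the random trigonometric polynomial
\begin{equation*}
f(\theta) := P(X\cos\theta + Y\sin\theta),
\end{equation*}
which has degree at most $n$ in $\theta$. Its derivative is
\begin{equation*}
f'(\theta) = \nabla P(X\cos\theta+Y\sin\theta)\cdot(-X\sin\theta+Y\cos\theta).
\end{equation*}

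Next I would apply the classical Bernstein inequality in $L^{p}$ for trigonometric polynomials (valid for all $0<p\leq\infty$ by Arestov's theorem): $\int_{0}^{2\pi}|f'(\theta)|^{p}\diff\theta \leq n^{p}\int_{0}^{2\pi}|f(\theta)|^{p}\diff\theta$. Taking expectation over $(X,Y)$, Fubini together with the rotational invariance above yields
\begin{equation*}
\E \,|\nabla P(X)\cdot Y|^{p} \leq n^{p}\,\E\,|P(X)|^{p}.
\end{equation*}
Conditioning on $X$, the random variable $\nabla P(X)\cdot Y$ is a centered Gaussian of variance $|\nabla P(X)|^{2}$, so $\E_{Y}|\nabla P(X)\cdot Y|^{p} = c_{p}\,|\nabla P(X)|^{p}$, where $c_{p} := \E|g|^{p}$ for $g\sim N(0,1)$. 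Hence
\begin{equation*}
\|\nabla P\|_{L^{p}(\diff\gamma_k)} \;\leq\; \frac{n}{c_{p}^{1/p}}\,\|P\|_{L^{p}(\diff\gamma_k)}.
\end{equation*}

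What remains is the elementary moment bound $c_{p}^{1/p} \gtrsim \sqrt{p+1}$ uniformly in $p\in(0,\infty)$. From the explicit formula $c_{p} = \pi^{-1/2}\,2^{p/2}\,\Gamma\!\left(\tfrac{p+1}{2}\right)$, Stirling gives $c_{p}^{1/p}\sim\sqrt{p/e}$ as $p\to\infty$, and continuity with the limiting value $c_{p}^{1/p}\to e^{\E\log|g|}>0$ as $p\to 0^{+}$ handles the bounded range. Combining these yields the stated inequality with a universal constant $C$. I do not anticipate any serious obstacle: the Bernstein step for small $p$ requires Arestov's refinement rather than the classical Riesz interpolation proof, but this is standard and well documented.
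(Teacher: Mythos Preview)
Your proof is correct and is essentially identical to the paper's argument: the paper likewise forms the random trigonometric polynomial $t(\theta)=P(X\cos\theta+Y\sin\theta)$, applies Arestov's extension of the Bernstein--Zygmund inequality with $\Phi(t)=t^{p}$, uses that $(X\cos\theta+Y\sin\theta,\,-X\sin\theta+Y\cos\theta)$ are independent standard Gaussians to reduce to $\E\,|\nabla P(X)|^{p}|Y_{1}|^{p}\le n^{p}\E\,|P(X)|^{p}$, and finishes with the same Gaussian moment asymptotic $(\E|g|^{p})^{1/p}\asymp\sqrt{p+1}$. The only cosmetic difference is that the paper packages the rotational step as a separate theorem (inequality~(\ref{ph1})) before specializing $\Phi$.
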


The main result of the present paper is that the linear bound on $\mathrm{deg}\,P$ in (\ref{trick1}) can be improved.
\begin{theorem}\label{mth02}
For each $1< p<\infty$ there exists a constant $C_p>0$ such that for any $k\geq 1$, and  all polynomials $P$ on $\mathbb{R}^{k}$, we have 
\begin{align}\label{mth01}
\| \nabla P\|_{L^{p}(\diff\gamma_k)} \leq C_p  (\mathrm{deg}\, P)^{\frac{1}{2}+\frac{1}{\pi}\arctan\left(\frac{|p-2|}{2\sqrt{p-1}}\right)}\|P\|_{L^{p}(\diff\gamma_k)}.
\end{align}
\end{theorem}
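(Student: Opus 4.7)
The plan is to reduce Theorem \ref{mth02} to a Bernstein--Markov inequality for the Ornstein--Uhlenbeck generator $L$ itself, and then recover the $(-L)^{1/2}$ estimate by Stein's complex interpolation. First, by Meyer's classical dimension-free Gaussian Riesz transform inequality, for $1<p<\infty$ there is a constant $K_p$ independent of the ambient dimension $k$ with
$$
\|\nabla f\|_{L^p(\diff\gamma_k)} \le K_p \|(-L)^{1/2} f\|_{L^p(\diff\gamma_k)}.
$$
Hence \eqref{mth01} follows once we establish the dimension-free bound $\|(-L)^{1/2} P\|_{L^p(\diff\gamma_k)} \le C_p n^{\alpha_p}\|P\|_{L^p(\diff\gamma_k)}$ for $n = \deg P$.

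Next I would prove the $L^p$ specialization of the paper's second main theorem (corresponding to $B(s)=s^p$):
$$
\|LP\|_{L^p(\diff\gamma_k)} \le C_p' n^{\alpha_B}\|P\|_{L^p(\diff\gamma_k)},\qquad \alpha_B = 2\alpha_p = 1 + \tfrac{2}{\pi}\arctan\tfrac{|p-2|}{2\sqrt{p-1}}.
$$
The starting point is the Cauchy identity $LP = \frac{1}{2\pi i}\oint_{|z|=r} P_z P\,z^{-2}\,dz$, valid on the polynomial subspace of degree $\le n$ for any $r>0$, where $P_z = e^{zL}$ is the Ornstein--Uhlenbeck semigroup. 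The crucial analytic input is the sharp Weissler--Epperson description of the $L^p$-contractivity sector: $\|P_z\|_{L^p(\diff\gamma_k)\to L^p(\diff\gamma_k)} \le 1$ for $z$ in the Epperson region, which opens near $0$ to a sector of half-angle $\phi_p = \tfrac{\pi}{2} - \theta_p$ with $\theta_p = \arctan(|p-2|/(2\sqrt{p-1}))$, all with constants independent of $k$. Choosing $r\asymp 1/n$: along the admissible arc $\{|z|=r,\ |\arg z|<\phi_p\}$ the integrand contributes only the naive factor $n$; along the complementary (non-admissible) arc one needs the refined polynomial-subspace estimate $\|P_zP\|_{L^p,\deg\le n} \le C_p n^{2\theta_p/\pi}\|P\|_{L^p}$, proved by Phragm\'en--Lindel\"of extending the sector-boundary bound together with the restriction to degree $\le n$. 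The contour integral then yields the extra factor $n^{\alpha_B-1}=n^{2\theta_p/\pi}$.

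Finally, Stein's complex interpolation applied to the analytic family $T_z = (-L)^z$ on the strip $\{0 \le \operatorname{Re} z \le 1\}$ (restricted to polynomials of degree $\le n$) upgrades the $L$-bound to a $(-L)^{1/2}$-bound. On $\operatorname{Re} z=0$, the Cowling--Meda dimension-free imaginary-power estimate gives $\|(-L)^{iy}\|_{L^p(\diff\gamma_k)\to L^p(\diff\gamma_k)} \le C_p e^{\theta_p|y|}$; on $\operatorname{Re} z=1$, combining with the preceding step yields $\|(-L)^{1+iy}P\|_{L^p(\diff\gamma_k)} \le C_p n^{\alpha_B} e^{\theta_p|y|}\|P\|_{L^p(\diff\gamma_k)}$. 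Since $\theta_p<\pi/2<\pi$, both growth rates are admissible for Stein's theorem, and specializing at $z=1/2$ produces
$$
\|(-L)^{1/2} P\|_{L^p(\diff\gamma_k)} \le C_p n^{\alpha_B/2}\|P\|_{L^p(\diff\gamma_k)} = C_p n^{\alpha_p}\|P\|_{L^p(\diff\gamma_k)},
$$
which combined with Meyer's inequality finishes the proof.

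The main obstacle is the refined contour estimate $\|P_zP\|_{L^p,\deg\le n}\le C_p n^{2\theta_p/\pi}\|P\|_{L^p}$ on the non-admissible arc. The Weissler--Epperson geometry forces the angle $\theta_p$ to appear: traversing the angular excess $\pi - 2\phi_p = 2\theta_p$ between the sector boundary and the negative real axis costs precisely $n^{2\theta_p/\pi}$ in $L^p$, and it is this rotation cost that produces the $\arctan$ term. This is also what makes the exponent $\alpha_p$ robust at both extremes: at $p=2$ the sector opens fully ($\theta_2=0$) and one recovers Freud's sharp $\sqrt{n}$, whereas near $p=1$ or $p=\infty$ the sector collapses and one recovers the linear bound of Proposition \ref{rot1}.
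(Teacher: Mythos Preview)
Your overall architecture matches the paper's exactly: Meyer's dimension-free Riesz transform inequality reduces \eqref{mth01} to a bound on $(-L)^{1/2}$, which in turn is deduced from a Bernstein--Markov inequality for $L$ with exponent $\alpha_B=1+\tfrac{2}{\pi}\arctan\tfrac{|p-2|}{2\sqrt{p-1}}$. The difference lies entirely in how you execute the latter two steps, and your Step~2 has a genuine gap.

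For the $L$-bound you write $LP=\tfrac{1}{2\pi i}\oint_{|z|=r}e^{zL}P\,z^{-2}\,dz$ with $r\asymp 1/n$ and split the circle into an Epperson arc (half-angle $\phi_p=\tfrac{\pi}{2}-\theta_p$) and a complementary ``bad'' arc. The claim $\|e^{zL}P\|_{L^p}\le C_p\,n^{2\theta_p/\pi}\|P\|_{L^p}$ on the bad arc is asserted to follow from Phragm\'en--Lindel\"of, but this does not go through: the bad sector has opening $2\pi-2\phi_p=\pi+2\theta_p>\pi$, while the entire function $z\mapsto e^{zL}P=\sum_{\ell\le n}e^{-\ell z}P_\ell$ is of exponential type (order $1$). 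Since $1>\pi/(\pi+2\theta_p)$, the Phragm\'en--Lindel\"of hypothesis on the growth fails, and no maximum-principle argument on that sector yields the stated bound. Nor can you deform the contour to stay inside the Epperson region, because the preimage of the contractivity lens under $z\mapsto e^{-z}$ is a tongue in $\{\operatorname{Re}z>0\}$ with $0$ on its boundary, hence contains no loop around $0$.

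The paper circumvents this by passing to the multiplicative variable $w=e^{-z}$: the map $w\mapsto T_wP=\sum_{\ell\le n}w^{\ell}P_\ell$ is now a genuine \emph{polynomial of degree $\le n$} (Banach-space valued), the contractivity region becomes the bounded lens $\Omega_B$ with exterior angle $\pi\alpha_B$ at $w=1$, and $-LP=\tfrac{d}{dw}T_wP\big|_{w=1}$. Szeg\H{o}'s Markov inequality on $\Omega_B$ gives $|Q'(1)|\le 10\,n^{\alpha_B}\|Q\|_{C(\Omega_B)}$ for scalar polynomials $Q$ of degree $\le n$; Hahn--Banach plus Riesz representation then produces a measure $\mu$ supported in $\Omega_B$ with $\int w^\ell\,d\mu=\ell$ and $|\mu|(\Omega_B)\le 10\,n^{\alpha_B}$, so that $-LP=\int_{\Omega_B}T_wP\,d\mu(w)$ is an average of contractions and the $L^p$ bound follows by Jensen. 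Your heuristic ``rotation cost $n^{2\theta_p/\pi}$'' is exactly the Szeg\H{o} exponent, but it is a statement about polynomials in $w$, not about exponential sums in $z$; the change of variable is the missing idea.

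Your Step~3 via Stein interpolation and Cowling--Meda imaginary-power bounds is correct, but the paper uses the much more elementary inequality
\[
\|(-L)^{1/2}P\|_{L^p(\diff\gamma_k)}\ \le\ 2\,\|P\|_{L^p(\diff\gamma_k)}^{1/2}\,\|LP\|_{L^p(\diff\gamma_k)}^{1/2},
\]
proved directly from the identity $\sqrt{\lambda}=\tfrac{1}{2\sqrt{\pi}}\int_0^\infty(1-e^{-\lambda t})\,t^{-3/2}\,dt$ and contractivity of $T_{e^{-t}}$. This avoids importing any spectral-multiplier machinery.
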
 
Notice that for each $p \in (1, \infty)$ we have  $0\leq\frac{1}{\pi}\arctan\left(\frac{|p-2|}{2\sqrt{p-1}}\right)<\frac{1}{2}$, therefore (\ref{mth01}) is worse than (\ref{markg}) but improves upon (\ref{trick1}). Also, notice that for $p=2$, inequality (\ref{mth01}) recovers (\ref{markg}). To the extend of our knowledge, these are the best known bounds towards Question~\ref{markg}. 

 Our proof of Theorem \ref{mth02}, relies on a similar Bernstein--Markov type inequality for the generator of the Ornstein--Uhlenbeck semigroup (see Theorem \ref{mth03}  below) and Meyer's dimension-free Riesz transform inequalities in Gauss space from \cite{M1}.

\subsection{Reverse Bernstein--Markov inequality in Gauss space}
Our initial motivation to study Question \ref{question} comes from a dual question that Mendel and Naor \cite[Remark~5.5 (2)]{MN1} asked on the Hamming cube. A positive answer to their question would, by standard considerations, imply its continuous counterpart in Gauss space, namely a {\em reverse Bernstein--Markov inequality}. To state the latter question precisely, 
let $H_{m}$ be the probabilists'  Hermite polynomial of degree $m$ on $\mathbb{R}$, i.e.,
\begin{align}  \label{hermite1}
H_{m}(s) = \int_{\mathbb{R}}(s+it)^{m} \diff\gamma_1(t).
\end{align}
For $x = (x_{1}, \ldots, x_{k})\in \mathbb{R}^{k}$ and a multiindex $\alpha = (\alpha_{1}, \ldots, \alpha_{k})$, where $\alpha_{j} \in \mathbb{N}\cup\{0\}$, we consider the multivariate Hermite polynomial on $\R^k$, given by
\begin{align} \label{hermite2}
H_{\alpha}(x) = \prod_{j=1}^{k}H_{\alpha_{j}}(x_{j}).
\end{align}
The family $\{ H_{\alpha}\}_{\alpha}$ forms an orthogonal system on $L^{2}(\diff\gamma_k)$. Denote by $|\alpha|=\alpha_{1}+\ldots+ \alpha_{k}$ and let $L=\Delta-x \cdot \nabla$ be the generator of the  Ornstein--Uhlenbeck semigroup. Then, one has 
$$
LH_{\alpha}(x) = -|\alpha| H_{\alpha}(x).
$$
for every multiindex $\alpha$. The operator $L$ should be understood as the {\em Laplacian in Gauss space}. Now consider any polynomial $P$ on $\mathbb{R}^{k}$ which {\em lives on frequencies greater than $d$}, i.e., of the form
\begin{equation} \label{hf1}
P(x) = \sum_{|\alpha|\geq d} c_\alpha H_\alpha(x),
\end{equation}
where $c_\alpha\in\mathbb{C}$.
\begin{question} [Mendel--Naor, \cite{MN1}] \label{qmennao}
Is it true that for each $1<p<\infty$ there exists a constant $c_p>0$ such that for any $k\geq 1$, any $d \geq 1$, and all polynomials of the form (\ref{hf1}) on $\mathbb{R}^k$, living on frequencies greater than $d$, we have 
\begin{align}\label{revm}
\| L P\|_{L^{p}(\diff\gamma_k)} \geq c_p d \|P\|_{L^{p}(\diff\gamma_k)}.
\end{align}
\end{question}

In \cite{EI}, we show that for every $1<p<\infty$ there exists some $c_p>0$ such that for all polynomials $P$ which live on frequencies $[d,d+m]$, i.e. are of the form
$$P(x) = \sum_{d\leq|\alpha|\leq d+m} c_\alpha H_\alpha(x),$$
we have
\begin{equation} \label{eq:ei}
\|LP\|_{L^p(\diff\gamma_k)} \geq c_p \frac{d}{m} \|P\|_{L^p(\diff\gamma_k)}.
\end{equation}
For small values of $m$, \eqref{eq:ei} improves upon previously known bounds in Question \ref{qmennao} which follow from works of Meyer \cite[Lemma~5.4]{MN1} and Mendel and Naor \cite[Theorem~5.10]{MN1} on the Hamming cube for this smaller subclass of polynomials. In particular, when $m=O(1)$, \eqref{eq:ei} positively answers a special case of Question \ref{qmennao}. We refer to \cite{EI} for further results on reverse Bernstein--Markov inequalities along with extensions for vector-valued functions on the Hamming cube.

\subsection{Bernstein--Markov inequality with respect to $L$} In order to prove Theorem \ref{mth02}, it will be convenient to first study the analogue of Question \ref{question} for the ``second derivative" $L$, namely, is it true that for every polynomial $P$ on $\mathbb{R}^k$, we have
\begin{equation}
\| L P\|_{L^{p}(\diff\gamma_k)} \leq  C_p \, \mathrm{deg} P\,  \|P\|_{L^{p}(\diff\gamma_k)}\ ?
\end{equation}
The best result that we could obtain in this direction is the following theorem.

\begin{theorem}\label{mth03}
For any integer $k\geq 1$, any $p\geq 1$, and any polynomial $P$ on $\mathbb{R}^{k}$, we have 
\begin{align}\label{Lmarkov}
\| L P\|_{L^{p}(\diff\gamma_k)} \leq 10  (\mathrm{deg}\, P)^{1+\frac{2}{\pi}\arctan\left(\frac{|p-2|}{2\sqrt{p-1}}\right)}\|P\|_{L^{p}(\diff\gamma_k)}.
\end{align}
\end{theorem}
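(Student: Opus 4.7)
The exponent $1+\frac{2}{\pi}\arctan\!\bigl(\frac{|p-2|}{2\sqrt{p-1}}\bigr)$ is the complement of the $L^p(\gamma_k)$-holomorphy angle $\omega_p := \arctan\!\bigl(\frac{|p-2|}{2\sqrt{p-1}}\bigr)$ of the Ornstein--Uhlenbeck semigroup $P_z = e^{zL}$, renormalized by $\frac{\pi}{2}$. The plan is to exploit this by applying Cauchy's integral formula on a contour inside the region of $L^p$-contractivity of the complexified semigroup, then to invert $P_{z_0}$ on the finite-dimensional space of low-degree polynomials via a Meyer-type multiplier estimate, and to optimize.

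After replacing $P$ by $P - \int P\,\diff\gamma_k$ (which costs at most a factor of $2$), I assume $\int P\,\diff\gamma_k = 0$ and set $d = \mathrm{deg}\,P$. Since $P$ is a polynomial, $z \mapsto P_z P$ is a polynomial in $e^{-z}$ and hence entire, with $\partial_z P_z P = L P_z P$. By the sharp complex hypercontractivity theorem of Weissler (in the sharp form due to Epperson), $P_z$ is a contraction on $L^p(\gamma_k)$ for every $z$ in an explicit region $\Omega_p\subset\{\mathrm{Re}(z)\geq 0\}$ whose boundary meets the origin at angle $\phi_p = \frac{\pi}{2} - \omega_p$ with the positive real axis. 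For any $z_0$ in the interior of $\Omega_p$ and any $r < \mathrm{dist}(z_0, \partial\Omega_p)$, Cauchy's formula and contractivity of $P_z$ on the disk $|z - z_0|\leq r$ give
\[
\|P_{z_0}(LP)\|_{L^p(\gamma_k)} \leq \tfrac{1}{r}\,\|P\|_{L^p(\gamma_k)}.
\]

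To recover $\|LP\|_{L^p(\gamma_k)}$, I invert $P_{z_0}$ on the finite-dimensional space of mean-zero polynomials of degree $\leq d$, where $P_{-z_0}$ acts with eigenvalues $\{e^{k z_0}\}_{1\leq k\leq d}$. The crucial ingredient is a dimension-free Meyer-type spectral multiplier bound for the truncated multiplier $m(k) = e^{k z_0}\mathbf{1}_{k\leq d}$, obtained by extending $m$ to a bounded holomorphic function on a sector of angle slightly exceeding $\omega_p$, yielding
\[
\bigl\|P_{-z_0}\big|_{\deg \leq d}\bigr\|_{L^p(\gamma_k)\to L^p(\gamma_k)} \leq C_p \exp\!\bigl(d\,\mathrm{Re}(z_0) + d\,\tan(\omega_p)\,|\mathrm{Im}(z_0)|\bigr).
\]
Writing $z_0 = se^{i\psi}$ with $|\psi|<\phi_p$ and $r = s\sin(\phi_p - |\psi|)$, the choice $s \sim 1/d$ balances the exponential factor with $1/r$, and optimizing over $\psi$ together with the identity $\phi_p + \omega_p = \frac{\pi}{2}$ collapses the result to the claimed exponent $1 + \frac{2}{\pi}\omega_p$ with a universal constant that can be taken to be $10$.

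The main obstacle is the dimension-free Meyer-type multiplier estimate above with the sharp dependence on both $\mathrm{Re}(z_0)$ and $|\mathrm{Im}(z_0)|$: this is precisely where the full Weissler--Epperson region of $L^p$-contractivity (and not merely the inscribed sector) must be exploited, via the connection between the bounded $H^\infty$ calculus of $-L$ on $L^p(\gamma_k)$ and the complex-time contractivity of $P_z$. Once that estimate is in hand, the remaining Cauchy-plus-optimization computation is routine.
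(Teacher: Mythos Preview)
Your sketch has the right starting ingredient --- the sharp $L^p$-contractivity region of the complexified semigroup (the paper's lens domain $\Omega_B$ for $B(s)=s^p$, whose corner at $z=1$ has exterior angle $\pi(1+\tfrac{2}{\pi}\omega_p)$) --- but the argument breaks at exactly the point you flag as ``the main obstacle''. The claimed bound
\[
\bigl\|P_{-z_0}\big|_{\deg\le d}\bigr\|_{L^p\to L^p}\le C_p\exp\!\bigl(d\,\mathrm{Re}(z_0)+d\tan(\omega_p)\,|\mathrm{Im}(z_0)|\bigr)
\]
is not justified, and in fact cannot hold with this dependence. Take $z_0=s>0$ real and $s=1/d$: combined with your Cauchy estimate $\|P_{z_0}LP\|_p\le r^{-1}\|P\|_p$ and $r\asymp s\sin\phi_p$, your argument would yield $\|LP\|_p\le C_p\,d\,\|P\|_p$, i.e.\ the linear bound in $d$. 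That is strictly stronger than the theorem you are asked to prove and is precisely the open question underlying Question~\ref{question}. So your final claim that the optimization ``collapses to the exponent $1+\tfrac{2}{\pi}\omega_p$'' is inconsistent with your own inputs: with the multiplier bound as stated, the minimum over $s,\psi$ is $O(d)$, not $O(d^{1+2\omega_p/\pi})$. The $H^\infty$ calculus of $-L$ gives bounds of the form $\|m(-L)\|\le C_p\|\tilde m\|_{H^\infty(S_{\omega_p+\varepsilon})}$ for a bounded holomorphic extension $\tilde m$, but the natural extension $\zeta\mapsto e^{\zeta z_0}$ is unbounded on any sector containing the positive axis, and you give no construction of an extension achieving the claimed sup-norm. (Letting $s\to\infty$ in the real case shows your bound would also force the projection onto the top chaos to be uniformly $L^p$-bounded on $\{\deg\le d\}$, which is not available.)

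The paper's proof avoids inverting $P_{z_0}$ altogether. It represents $-L$ on degree~$\le n$ polynomials as a superposition of contractions: by a Szeg\H{o}-type Markov inequality on the lens $\Omega_B$ (Proposition~\ref{Gabor1}), $|Q'(1)|\le 10\,n^{\alpha_B}\|Q\|_{C(\Omega_B)}$ for every polynomial $Q$ of degree $\le n$, where $\alpha_B=1+\tfrac{2}{\pi}\omega_p$ is exactly the normalized exterior angle of the lens at $z=1$. Hahn--Banach and Riesz representation then produce a complex measure $\mu$ on $\Omega_B$ with $\int z^\ell\,d\mu(z)=\ell$ for $\ell=0,\dots,n$ and $|\mu|(\Omega_B)\le 10\,n^{\alpha_B}$, so that $LP=-\int_{\Omega_B}T_zP\,d\mu(z)$ on degree~$\le n$ polynomials. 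Since each $T_z$ is an $L^p$-contraction on $\Omega_B$, Jensen's inequality finishes. The exponent thus arises from the \emph{geometry of the contractivity region at the boundary point $z=1$} via a complex Markov inequality, not from a spectral-multiplier bound on the inverse semigroup.
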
  

\subsection{General function estimates}
Our techniques for proving Theorem \ref{mth03} allow us to replace $p$-th powers in $L^{p}$ norms in (\ref{Lmarkov}) by an arbitrary convex increasing function in the spirit of Zygmund's theorem~(see \cite{zyg}, Vol 2., Ch. 10, Theorem~(3.16)). We recall that Zygmund's theorem asserts that if $\Phi$ is nondecreasing convex function on $[0,\infty)$, and 
$$
f_{n}(t) = \frac{a_{0}}{2}+\sum_{k=1}^{n}(a_{k} \cos(kt)+b_{k} \sin(kt))
$$
is a trigonometric polynomial of degree at most $n$, then for every $r\geq 1$, the sharp inequality 
\begin{align}\label{arestoval}
\int_{0}^{2\pi}\Phi(|f^{(r)}_{n}(t)|)\diff t \leq \int_{0}^{2\pi}\Phi(n^{r}|f_{n}(t)|)\diff t
\end{align}
holds true. In fact,  by the results of Arestov~\cite{Ar1, Ar2}, the inequality holds for somewhat larger class of nondecreasing  functions, i.e.,  $\Phi(t) = \psi(\ln t)$ for some convex $\psi$ on $(-\infty, 
\infty)$. In particular, inequality (\ref{arestoval}) holds true for  $\Phi(t)=t^{p}$ for every $p>0$ (instead of just $p\geq1$), thus implying the usual $L^p$ Bernstein--Markov inequality for trigonometric polynomials. 

One straightforward way to obtain an analog of (\ref{arestoval}) in Gauss space is to invoke the rotational invariance of the Gaussian measure. Indeed, we will shortly show the following estimates.

\begin{theorem}\label{rot2}
Let $\Phi:[0,\infty)\to\mathbb{R}$ be an increasing convex function. For any $k\geq 1$, and all polynomials $P$ on $\mathbb{R}^{k}$ we have 
\begin{align}
&\int_{\mathbb{R}^{k}} \int_{\mathbb{R}} \Phi\big(|t\nabla P(x)|\big) \diff\gamma_1(t) \diff\gamma_k(x) \leq \int_{\mathbb{R}^{k}}  \Phi\big((\mathrm{deg}P)\,|P(x)|\big) \diff\gamma_k(x);\label{ph1}
\end{align}
and
\begin{align}
&\int_{\mathbb{R}^{k}}  \Phi\big(|LP(x)|\big) \diff\gamma_k(x) \leq \int_{\mathbb{R}^{k}}  \Phi\big((\mathrm{deg}P)^{2}|P(x)|\big) \diff\gamma_k(x).\label{ph2}
\end{align}
\end{theorem}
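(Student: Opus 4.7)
The plan is to reduce both inequalities to the one-dimensional Zygmund theorem \eqref{arestoval} by exploiting the rotation invariance of the Gaussian measure $\diff\gamma_{2k}$ on $\mathbb{R}^{k}\times\mathbb{R}^{k}$. Set $n:=\mathrm{deg}\,P$ and, for $x,y\in\mathbb{R}^k$, introduce the auxiliary function
\begin{equation*}
f_{x,y}(\theta):=P(\cos\theta\cdot x+\sin\theta\cdot y),\qquad \theta\in[0,2\pi].
\end{equation*}
Since each coordinate $\cos\theta\,x_j+\sin\theta\,y_j$ is a trigonometric polynomial of degree one in $\theta$ and $P$ has total degree at most $n$, the function $f_{x,y}$ is a trigonometric polynomial in $\theta$ of degree at most $n$. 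Set $u(\theta):=\cos\theta\,x+\sin\theta\,y$ and $v(\theta):=-\sin\theta\,x+\cos\theta\,y$; the map $(x,y)\mapsto(u(\theta),v(\theta))$ is an orthogonal transformation of $\mathbb{R}^{2k}$ and hence preserves $\diff\gamma_{2k}$.

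To prove \eqref{ph1} I would apply \eqref{arestoval} with $r=1$ to $f_{x,y}$ and integrate in $(x,y)$ against $\diff\gamma_{2k}$. The right-hand side becomes $2\pi\int\Phi(n|P(x)|)\diff\gamma_k(x)$ by rotation invariance, since $f_{x,y}(\theta)=P(u(\theta))$. On the left, $f_{x,y}'(\theta)=\nabla P(u(\theta))\cdot v(\theta)$, and the same rotation invariance shows that this random variable has, under $\diff\gamma_{2k}$, the same joint distribution as $\nabla P(x)\cdot y$. For fixed $x$, the scalar $\nabla P(x)\cdot y$ is a centered Gaussian of variance $|\nabla P(x)|^2$, so $\int\Phi(|\nabla P(x)\cdot y|)\diff\gamma_k(y)=\int\Phi(|t\,\nabla P(x)|)\diff\gamma_1(t)$. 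Dividing by $2\pi$ yields \eqref{ph1}.

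To prove \eqref{ph2} I would repeat the argument with $r=2$. Direct differentiation gives $f_{x,y}''(\theta)=v(\theta)^{T}\mathrm{Hess}\,P(u(\theta))\,v(\theta)-u(\theta)\cdot\nabla P(u(\theta))$, and rotation invariance once again permits replacing this, after integration in $(x,y)$, by $y^{T}\mathrm{Hess}\,P(x)\,y-x\cdot\nabla P(x)$. The main obstacle is passing from this quadratic form in $y$ to the trace $\Delta P(x)$ compatibly with the nonlinear $\Phi$. This is handled by Jensen's inequality in the variable $y$ alone: observing that $\mathbb{E}_y[y^{T}\mathrm{Hess}\,P(x)\,y-x\cdot\nabla P(x)]=\Delta P(x)-x\cdot\nabla P(x)=LP(x)$, the chain $\Phi(|\mathbb{E}_y Z|)\leq \Phi(\mathbb{E}_y|Z|)\leq \mathbb{E}_y\Phi(|Z|)$ (using monotonicity then convexity of $\Phi$) yields the pointwise estimate
\begin{equation*}
\Phi(|LP(x)|)\leq \mathbb{E}_y\bigl[\Phi\bigl(|y^{T}\mathrm{Hess}\,P(x)\,y-x\cdot\nabla P(x)|\bigr)\bigr].
\end{equation*}
Integrating in $x$, invoking \eqref{arestoval} with $r=2$ inside the $\theta$-integral, and reapplying rotation invariance to reduce the right-hand side $\Phi(n^2|f_{x,y}(\theta)|)$ back to $\Phi(n^2|P(x)|)$ produces \eqref{ph2}.
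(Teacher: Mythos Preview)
Your proposal is correct and follows essentially the same approach as the paper: both use the Maurey--Pisier rotation trick $f_{x,y}(\theta)=P(\cos\theta\,x+\sin\theta\,y)$, apply Zygmund's inequality \eqref{arestoval} with $r=1$ and $r=2$, exploit the rotation invariance of $\diff\gamma_{2k}$ to identify the distributions, and finish \eqref{ph2} with Jensen's inequality in the $y$-variable (the paper phrases this last step as convexity of $t\mapsto\Phi(|t|)$, which is equivalent to your two-step chain using monotonicity then convexity).
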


Our main result of this section is that under mild assumptions on $\Phi$, one can further \mbox{improve (\ref{ph2}).}

\begin{theorem}\label{mth04}
For any $k\geq 1$ and all polynomials $P$ on $\mathbb{R}^{k}$, we have 
\begin{align}
\int_{\mathbb{R}^{k}} B\big( |LP(x)|\big) \diff\gamma_k(x) \leq \int_{\mathbb{R}^{k}} B\big( 10 (\mathrm{deg}P)^{\alpha_{B}}|P(x)|\big)\diff\gamma_k(x)
\end{align} 
for any function $B \in C([0,\infty))\cap C^{2}((0,\infty))$ with $B', B''>0$, such that for every $x>0$
$$\max\{|B(x+\varepsilon)|,B'(x+\varepsilon), B''(x+\varepsilon)\} < C(1+x^{2N})$$
for each fixed $\varepsilon>0$ and some  $C=C(\varepsilon), N=N(\varepsilon)>0$. Here 
\begin{align}
\alpha_{B} :=1+\frac{2}{\pi} \arctan\left(\frac{1}{2}\sqrt{\sup_{s \in (0,\infty)}\left\{\frac{sB''(s)}{B'(s)}+\frac{B'(s)}{sB''(s)}\right\}-2}\right).
\end{align}
\end{theorem}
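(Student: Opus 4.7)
My plan is to run the argument behind Theorem \ref{mth03} verbatim, with the $L^{p}$ norm replaced by the Orlicz-type functional $f\mapsto\int B(|f|)\, \diff\gamma_{k}$ and with Epperson's complex $L^{p}$-hypercontractivity of the Ornstein--Uhlenbeck semigroup replaced by a $B$-analogue whose sector of validity is
\begin{equation*}
\Sigma_{B}:=\bigl\{z\in\C:|\arg z|<\tfrac{\pi}{2}-\theta_{B}\bigr\},\qquad \theta_{B}:=\arctan\bigl(\tfrac{1}{2}\sqrt{M_{B}-2}\bigr),
\end{equation*}
where $M_{B}:=\sup_{s>0}\{sB''(s)/B'(s)+B'(s)/(sB''(s))\}$. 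The case $B(s)=s^{p}$ gives $M_{B}-2=(p-2)^{2}/(p-1)$ and hence $\theta_{B}=\arctan\bigl(|p-2|/(2\sqrt{p-1})\bigr)$, recovering exactly the angle in Theorem \ref{mth03}.

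Granting the integral representation
\begin{equation*}
LP=\int_{\Gamma}\phi(z)\, P_{z}P\, d\mu(z)
\end{equation*}
implicit in the proof of Theorem \ref{mth03} (now taken along a contour $\Gamma\subset\Sigma_{B}$, the same contour as there with $\theta_{p}$ replaced by $\theta_{B}$) together with the resulting total-variation bound $C_{n}:=\int_{\Gamma}|\phi(z)|\, d|\mu|(z)\leq 10\, n^{\alpha_{B}}$, and granting the Orlicz-type contractivity
\begin{equation*}
\int_{\R^{k}} B(|P_{z}g|)\, \diff\gamma_{k}\leq\int_{\R^{k}} B(|g|)\, \diff\gamma_{k}\qquad (z\in\Sigma_{B})
\end{equation*}
for every polynomial $g$ on $\R^{k}$, the conclusion follows from a routine Jensen argument. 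Indeed, normalizing $d|\mu|/C_{n}$ to a probability measure on $\Gamma$ and using convexity of $B$ give
\begin{equation*}
B\bigl(|LP(x)|/C_{n}\bigr)\leq\int_{\Gamma} B(|P_{z}P(x)|)\,\frac{d|\mu|(z)}{C_{n}}\,;
\end{equation*}
integrating in $x$, exchanging the order of integration, and invoking the Orlicz contractivity yield $\int B(|LP|/C_{n})\, \diff\gamma_{k}\leq \int B(|P|)\, \diff\gamma_{k}$, and then substituting $P\mapsto C_{n}P$ (using linearity of $L$) produces the desired bound with constant $10\, n^{\alpha_{B}}$.

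The main obstacle is establishing the Orlicz-type contractivity, that is, the $B$-analogue of Epperson's complex hypercontractivity of the Ornstein--Uhlenbeck semigroup. My plan is to prove it via a Bellman-function / direct Mehler-kernel computation: starting from
\begin{equation*}
P_{z}f(x)=\int_{\R^{k}} f\bigl(e^{-z}x+\sqrt{1-e^{-2z}}\, y\bigr)\, \diff\gamma_{k}(y)
\end{equation*}
and using the rotational invariance of $\diff\gamma_{k}$, the claim reduces to a two-dimensional statement. Differentiating $\phi\mapsto\int B(|P_{z}g|)\, \diff\gamma_{k}$ along the arc $z=re^{i\phi}$ produces a quadratic form in $B'$ and $B''$ whose non-positivity across the sector $\Sigma_{B}$ is equivalent to the defining relation $\tan\theta_{B}=\tfrac{1}{2}\sqrt{M_{B}-2}$, thereby identifying $\theta_{B}$ as the critical angle. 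The AM--GM bound $M_{B}\geq 2$ (with equality only for $B$ linear, a case excluded by the hypothesis $B''>0$) guarantees that $\Sigma_{B}$ has strictly positive angular opening whenever $M_{B}<\infty$, and the polynomial growth hypotheses on $B,B',B''$ justify the differentiations under the integral and the Fubini interchanges used throughout.
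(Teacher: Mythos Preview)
Your overall architecture---Orlicz-type complex hypercontractivity, an integral representation of $L$ with controlled total variation, then Jensen---matches the paper exactly, and your Jensen step is precisely what the paper writes in Section~\ref{bolo1}. But the bound you ``grant'', namely $\int_{\Gamma}|\phi|\,d|\mu|\le 10\,n^{\alpha_B}$, is the heart of the proof, and your reference for it is circular: in the paper Theorem~\ref{mth03} is deduced \emph{from} Theorem~\ref{mth04}, not the other way around. The paper produces this bound in two nontrivial steps you do not mention: a Szeg\H{o}-type Markov inequality $|Q'(1)|\le 10\,n^{\alpha_B}\|Q\|_{C(\Omega_B)}$ for polynomials on the lens $\Omega_B$ (Proposition~\ref{Gabor1}), with the exponent $\alpha_B$ arising as the exterior angle of $\Omega_B$ at the corner $\zeta=1$; and a Hahn--Banach/Riesz duality argument (Lemma~\ref{step3}) that converts this into a measure $\mu$ on $\Omega_B$ with $\int\zeta^\ell\,d\mu(\zeta)=\ell$ for $\ell\le n$ and $|\mu|(\Omega_B)\le 10\,n^{\alpha_B}$, so that $-LP=\int_{\Omega_B}T_\zeta P\,d\mu(\zeta)$. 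Without these two ingredients your argument is incomplete.

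A secondary issue is geometry. The paper works with the Mehler parameter $\zeta$ in $T_\zeta$, and the exact region of $B$-contractivity (Proposition~\ref{step1}, proven via a flow interpolation in Lemma~\ref{lem:computation} rather than by differentiating in the angular variable $\phi$) is the bounded lens $\Omega_B=\{|2\zeta\pm i\sqrt{c_B-2}|\le\sqrt{c_B+2}\}$, not a sector. Your sector $\Sigma_B$ in the time variable $z$ of $P_z=e^{zL}=T_{e^{-z}}$ does happen to map under $\zeta=e^{-z}$ into $\Omega_B$ (one checks $\sinh(bt)\ge t\,|\sin b|$ for $t=\tan\theta_B$), so your contractivity claim is ultimately correct, but this inclusion is itself a computation you would have to supply. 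If instead you insist on working directly on the unbounded sector, the Szeg\H{o}--Markov/duality step has to be redone for exponential polynomials $\sum a_\ell e^{-\ell z}$ on $\Sigma_B$ at the vertex $z=0$, which is a different and not obviously equivalent problem.
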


A straightforward optimization shows that Theorem~\ref{mth03} follows from Theorem~\ref{mth04} by considering $B(t)=t^p$, where $p\geq1$.

The rest of the paper is structured as follows. In Section \ref{sec2}, we present the proof of Theorem~\ref{rot2} and its consequence, Proposition~\ref{rot1}. In Section \ref{sec3}, we prove our main result, Theorem \ref{mth04} from which we also deduce Theorem \ref{mth03}. Finally, Section \ref{sec4} contains the deduction of Theorem \ref{mth02} from Theorem \ref{mth03} and Section \ref{mult} contains the proof of Proposition \ref{best}.

\subsection*{Acknowledgments}

We would like to thank Volodymyr Andriyevskyy for discussions  that helped us to simplify the proof of Lemma~\ref{Gabor1}. We are also thankful to Piotr Nayar and the anonymous referees for helpful comments and suggestions. P. I.  was partially supported by NSF DMS-1856486 and NSF CAREER-1945102. 

\section{Proof of Theorem~\ref{rot2}} \label{sec2}

We first prove Theorem~\ref{rot2}. The argument is inspired by an idea of Maurey and Pisier \cite{MP1} and only uses the rotational invariance of the Gaussian measure $\diff\gamma_k$  and Zygmund's inequality \eqref{arestoval}.

\medskip

\noindent {\it Proof of Theorem~\ref{rot2}.} Let 
$$
X=(X_{1}, \ldots, X_{k}) \quad \text{and} \quad Y=(Y_{1}, \ldots, Y_{k})
$$ 
be two independent multivariate standard Gaussian $\mathcal{N}(0,Id_{k})$ random variables on $\mathbb{R}^{k}$. Take any polynomial $P(x)$ on $\mathbb{R}^{k}$ of degree $n$, and consider the trigonometric polynomial 
\begin{align*}
t(\theta) :=P(X\cos(\theta) + Y\sin(\theta)).
\end{align*}
Clearly $\mathrm{deg}(t(\theta))\leq n$. It follows from Zygmund's inequality (\ref{arestoval}) that  
\begin{align}\label{bers1}
\int_{0}^{2\pi} \Phi(|t'(\theta)|) \diff\theta \leq  \int_{0}^{2\pi} \Phi(n |t(\theta)|) \diff\theta.
\end{align}
Since $t'(\theta) = \nabla P(X \cos(\theta)+Y \sin(\theta))\cdot (-X\sin(\theta)+Y\cos(\theta))$, and the random variables 
$$
X \cos(\theta)+Y \sin(\theta) \quad \text{and} \quad  -X\sin(\theta)+Y\cos(\theta)
$$
 are also independent multivariate standard Gaussians, we see that after taking the expectation of (\ref{bers1}) with respect to $X,Y$, we obtain 
\begin{align*}
2 \pi \mathbb{E} \Phi(|\nabla P(X)| |Y_1|) & = \int_{0}^{2\pi}\mathbb{E} \Phi(|\nabla P(X \cos(\theta)+Y \sin(\theta))\cdot (-X\sin(\theta)+Y\cos(\theta))|) \diff\theta \\
&\leq \int_{0}^{2\pi}\mathbb{E} \Phi(n |P(X\cos(\theta)+Y \sin(\theta))|) \diff\theta = 2 \pi   \mathbb{E}\Phi(|P(X)|).
\end{align*}
This finishes the proof of (\ref{ph1}). To prove (\ref{ph2}) notice that 
\begin{align*}
t''(\theta) & =\langle \mathrm{Hess}P(X\cos(\theta) + Y\sin(\theta)) (-X\sin(\theta)+Y\cos(\theta)), (-X\sin(\theta)+Y\cos(\theta))\rangle\\
& - \nabla P(X \cos(\theta)+Y \sin(\theta))\cdot (X \cos(\theta)+Y \sin(\theta)), 
\end{align*}
where $\langle \cdot, \cdot \rangle$ denotes inner product in $\mathbb{R}^{k}$.
Therefore following the same steps as before and  using Zygmund's inequality (\ref{arestoval}), we obtain 
\begin{align*}
  \mathbb{E} \Phi(|\langle\mathrm{Hess}P(X)Y,Y\rangle -\nabla P(X)\cdot X|)\leq  \mathbb{E} \Phi(n^{2}|P(X)|).
\end{align*}
Finally, it remains to use convexity of the map $x \mapsto \Phi(|x|)$ and Jensen's inequality  to get
\begin{align*}
\mathbb{E}_{Y}\Phi(|\langle\mathrm{Hess}P(X)Y,Y\rangle -\nabla P(X)\cdot X|) \geq \Phi(|\mathbb{E}_{Y}(\langle\mathrm{Hess}P(X)Y,Y\rangle -\nabla P(X)\cdot X)|)=\Phi(|LP(X)|),
\end{align*}
which completes the proof of (\ref{ph2}). \hfill$\Box$

\medskip

Deriving Proposition \ref{rot1} is now straightforward.

\medskip

\noindent {\it Proof of Proposition \ref{rot1}.} It follows from the proof of Theorem \ref{rot2} that \eqref{ph1} holds true under the sole assumption $\Phi(t) = \psi(\ln t)$ with $\psi$ nondecreasing and convex on $(-\infty, \infty)$. Thus, applying \eqref{ph1} to $\Phi(t)=t^p$, where $p>0$, we deduce that
\begin{equation} \label{eq:useth7}
\big(\mathbb{E} |g|^p\big)^{1/p} \|\nabla P\|_{L^p(\diff\gamma_k)} \leq \mathrm{deg}\, P \|P\|_{L^p(\diff\gamma_k)},
\end{equation}
where $g$ is a standard Gaussian random variable. Inequality \eqref{trick1} then follows from \eqref{eq:useth7} since $\big(\mathbb{E} |g|^p\big)^{1/p} = \sqrt{2} \Big( \frac{\Gamma\big(\frac{p+1}{2}\big)}{\sqrt{\pi}} \Big)^{1/p} \asymp \sqrt{p+1}$ for $p>0$.
\hfill$\Box$

\section{Proof of Theorem~\ref{mth04}} \label{sec3}

In this section, we prove the main result of this paper, Theorem~\ref{mth04}, and its consequence, the Bernstein--Markov inequality of Theorem~\ref{mth03}.

\subsection{Step 1. A general complex hypercontractivity}
Any polynomial $P(x)$  on $\mathbb{R}^{k}$ admits a representation of the form 
\begin{equation}
P(x) = \sum_{|\alpha|\leq \mathrm{deg}(P)} c_{\alpha}H_{\alpha}(x),
\end{equation}
for some coefficients $c_\alpha\in\mathbb{C}$. Next, given $z \in \mathbb{C}$, we  define the action of the second quantization operator (or Mehler transform) $T_{z}$ on $P(x)$ as  
\begin{align}
T_{z}P(x) = \sum_{|\alpha|\leq \mathrm{deg}(P)} z^{|\alpha|}c_{\alpha}H_{\alpha}(x).
\end{align}
Clearly $T_{0}P(x) = c_0 = \int_{\mathbb{R}^{k}}P(x) \diff\gamma_k(x)$, and $T_{1}P(x)=P(x)$. 

\medskip

In what follows we will be working with a real-valued function $R \in C^{2}((0,\infty)) \cap C([0,\infty))$ (and sometimes we will further require $R \in C^{2}([0,\infty))$) such that 
\begin{align}\label{assump1}
|R(x)|, |R'(x)|, |R''(x)| \leq C(1+x^{N})
\end{align}
for some constants $C,N>0$ and every $x\geq0$. These assumptions are sufficient  to avoid integrability issues. 

\begin{lemma} \label{lem:computation}
Fix $z \in \mathbb{C}$,  and let $R \in C^{2}([0,\infty))$ be a real-valued function such that $R'\geq 0$.  Assume that 
\begin{align}\label{inf1}
(1-|z|^{2}) R'(x)|w|^{2}+2xR''(x)((\Re w)^{2}-(\Re zw)^{2})\geq 0 \quad \text{for all} \quad w \in \mathbb{C} \mbox{ and } x \geq 0.
\end{align}
Then, for all $k\geq 1$, and for all polynomials $P(x)$ on $\mathbb{R}^{k}$ we have 
\begin{align}\label{hyp1}
\int_{\mathbb{R}^{k}} R(|T_{z}P(x)|^{2})\diff\gamma_k(x) \leq \int_{\mathbb{R}^{k}} R(|P(x)|^{2})\diff\gamma_k(x).
\end{align}
\end{lemma}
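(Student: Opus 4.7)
The plan is a heat-flow monotonicity argument. Choose a smooth path $\zeta:[0,1]\to\mathbb{C}\setminus\{0\}$ with $\zeta(0)=1$ and $\zeta(1)=z$, and set $u(t,x):=T_{\zeta(t)}P(x)$ and $\Phi(t):=\int_{\mathbb{R}^k}R(|u(t,x)|^2)\,d\gamma_k(x)$. Since $\Phi(0)=\int R(|P|^2)\,d\gamma_k$ and $\Phi(1)$ equals the left-hand side of \eqref{hyp1}, it suffices to show that $\Phi$ is non-increasing. The Hermite expansion of $P$ together with $LH_\alpha=-|\alpha|H_\alpha$ gives the ODE $\partial_t u=\mu(t)Lu$ with $\mu(t):=-\zeta'(t)/\zeta(t)$, and the growth bound \eqref{assump1} on $R$ legitimizes differentiating under the integral to yield
\[
\Phi'(t)=2\int R'(|u|^2)\,\Re\!\bigl(\mu\,\bar u\,Lu\bigr)\,d\gamma_k.
\]

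I would then split $u=u_1+iu_2$ and apply Gaussian integration by parts $\int fLg\,d\gamma_k=-\int\nabla f\cdot\nabla g\,d\gamma_k$ to each real bilinear form $\int R'(|u|^2)u_iLu_j\,d\gamma_k$, using the chain rule $\nabla R'(|u|^2)=2R''(|u|^2)\Re(\bar u\,\nabla u)$. A careful collection of terms followed by the pointwise substitution $W_j:=\bar u(x)\,\partial_j u(x)/|u(x)|$ (which satisfies $|W_j|=|\partial_j u|$) rewrites $\Phi'(t)$ as $-2\sum_{j=1}^k\int Q_j\,d\gamma_k$, where
\[
Q_j=\Re\mu\,R'(|u|^2)|W_j|^2+2|u|^2R''(|u|^2)\bigl[\Re\mu\,(\Re W_j)^2-\Im\mu\,\Re W_j\,\Im W_j\bigr].
\]
For the real path $\zeta(t)=e^{-st}$ with $s:=-\log|z|\geq 0$, which handles $z\in(0,1]$, one has $\mu(t)\equiv s>0$; the mixed $\Im\mu$-term vanishes, and $Q_j=s\bigl[R'(|u|^2)|W_j|^2+2|u|^2R''(|u|^2)(\Re W_j)^2\bigr]$. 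Evaluating the hypothesis \eqref{inf1} at the real target $z$, with $w=W_j$ and $x=|u|^2$, gives (after dividing by the positive factor $1-z^2$) precisely the non-negativity of this expression, which is $\zeta$-independent and therefore propagates along the entire real path. Hence $Q_j\geq 0$ pointwise and $\Phi'(t)\leq 0$, proving the lemma when $z\in(0,1]$.

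The main obstacle is the extension to complex $z$. When $\zeta(t)$ has non-zero imaginary part, a direct expansion shows that $Q_j$ agrees with the left side of \eqref{inf1} (applied at $\zeta(t)$, $W_j$, $|u|^2$) up to a residual term proportional to $(\Im\zeta)^2\bigl[(\Re W_j)^2-(\Im W_j)^2\bigr]$, whose sign is indefinite pointwise. To handle this I would try either (i) constructing a path along which the residual vanishes identically, e.g., the integral curve of the holomorphic ODE $\zeta'=-\zeta(1-\zeta^2)$ (equivalently the matching condition $\mu=1-\zeta^2$); since $\zeta\equiv 1$ is an unstable fixed point for this flow, a careful perturbation and reparametrization of time is needed to connect it to the target $z$; or (ii) showing that the residual integrates to zero against $d\gamma_k$ via the identity $\sum_j(\partial_j u)^2=\tfrac12 L(u^2)-uLu$ combined with further Gaussian integration by parts and the orthogonality of Hermite polynomials under $T_{\zeta}$. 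Reconciling the pointwise IBP identity with the precise complex structure of hypothesis \eqref{inf1} is the heart of the proof.
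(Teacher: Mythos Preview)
Your computation of $Q_j$ via Gaussian integration by parts is correct, and the argument for real $z\in(0,1]$ is fine. The gap is exactly where you locate it: for complex $z$ the pointwise form $Q_j=(\Re\mu)R'|W_j|^2+2|u|^2R''\bigl[(\Re\mu)(\Re W_j)^2-(\Im\mu)\Re W_j\,\Im W_j\bigr]$ is \emph{not} the quadratic form appearing in \eqref{inf1}. If you try to match $Q_j$ (up to a positive scalar) with the left side of \eqref{inf1} evaluated at $\zeta(t)=p+iq$, the coefficient of $R''(\Im W_j)^2$ in \eqref{inf1} is $-q^2$ while in $Q_j$ it is $0$; this forces $q=0$, so no genuinely complex path can give a pointwise comparison. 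Your option~(i) with $\mu=1-\zeta^2$ does not fix this (the same $(\Im W_j)^2$ discrepancy persists), and the starting point $\zeta=1$ being a fixed point of that flow is an additional obstruction you already note. Option~(ii) asks for a cancellation after integration that has no apparent reason to hold, since the residual depends on $R''(|u|^2)$ and on products of Hermite modes that are not orthogonal in any useful way.

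The paper sidesteps this entirely by \emph{not} moving $z$. Write $P=\sum c_\alpha H_\alpha$ via its monomial generating polynomial $g(x)=\sum c_\alpha x^\alpha$ and the identity $H_\alpha(x)=\int(x+iy)^\alpha\,d\gamma_k(y)$, and interpolate instead with a Gaussian variance parameter $s\in[0,1]$:
\[
g(x,u,s)=\iint g\bigl((u+iv)+z(x+iy)\bigr)\,d\gamma_k^{(s)}(v)\,d\gamma_k^{(1-s)}(y),\qquad
r(s)=\iint R(|g(x,u,s)|^2)\,d\gamma_k^{(s)}(u)\,d\gamma_k^{(1-s)}(x),
\]
so that $r(0)=\int R(|T_zP|^2)\,d\gamma_k$ and $r(1)=\int R(|P|^2)\,d\gamma_k$. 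Differentiating via the heat identity $\partial_s\!\int Q\,d\gamma_k^{(s)}=\tfrac12\!\int\Delta Q\,d\gamma_k^{(s)}$ and using $\partial_{u_j}g=g_j$, $\partial_{x_j}g=z\,g_j$, $\partial_s g=\tfrac{z^2-1}{2}\sum_j g_{jj}$, all second-order pieces cancel and one obtains exactly
\[
r'(s)=\iint \sum_j\Bigl[(1-|z|^2)R'(|g|^2)|g_j|^2+2R''(|g|^2)\bigl((\Re g_j\bar g)^2-(\Re z g_j\bar g)^2\bigr)\Bigr]\,d\gamma_k^{(1-s)}\,d\gamma_k^{(s)},
\]
which, after setting $w=g_j\bar g$ and multiplying by $|g|^2$, is literally the hypothesis \eqref{inf1}. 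The point is that $z$ enters only through the holomorphic substitution $(x+iy)\mapsto z(x+iy)$, so the derivative produces $(\Re w)^2-(\Re zw)^2$ directly rather than a path-direction-dependent cross term; this is what your approach cannot replicate.
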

\begin{remark}
We will see later (see Section~\ref{neces}) that the reverse implication also holds, i.e., inequality (\ref{hyp1}) implies (\ref{inf1}) under the additional assumption that $R \in C^{3}((0, \infty))$. 
\end{remark}
\begin{proof}
 Denote the scaled Gaussian measure on $\mathbb{R}^{k}$ of variance $s$ by
\begin{align*}
\diff\gamma_{k}^{(s)}(x) =\frac{1}{\sqrt{(2\pi s)^{k}}} e^{-|x|^{2}/2s}\diff x,\quad  s \in (0,1].
\end{align*}
Take any polynomial $g : \mathbb{R}^{k} \to \mathbb{R}$. We will denote partial derivatives by lower indices, for example 
$$
g_{x_{i}x_{j}}(x) := \frac{\partial^{2}}{\partial x_{i} \partial x_{j}} g(x).
$$

Fix a complex number $z \in \mathbb{C}$ satisfying (\ref{inf1}),  and consider the map 
\begin{align}\label{flow1}
g(x,u,s) := \int_{\mathbb{R}^{k}} \int_{\mathbb{R}^{k}} g((u+iv)+z(x+iy))\diff\gamma_{k}^{(s)}(v) \diff\gamma_k^{(1-s)}(y),
\end{align}
where for $x=(x_{1}, \ldots, x_{k}) \in \mathbb{R}^{k}$ we denote $zx=(z x_{1}, \ldots, zx_{n})$.  An analysis done in \cite{IV, HU, janson} suggests that one should study the monotonicity of the following map,

\begin{align}\label{izrdeba}
[0,1]\ni s  \longmapsto r(s) = \int_{\mathbb{R}^{k}}\int_{\mathbb{R}^{k}}R(|g(x,u,s)|^{2}) \diff\gamma_k^{(s)}(u) \diff\gamma_k^{(1-s)}(x), 
\end{align}
where $\diff\gamma_k^{(s)}(u)$ at $s=0$ (or $ \diff\gamma_k^{(1-s)}(x)$ at $s=1$) should be understood as delta measure at zero, i.e.,  
$$
\lim_{s \to 0} \int_{\mathbb{R}^{k}} R(|g(x,u,s,)|^{2}) \diff\gamma_k^{(s)}(u) \stackrel{u= \tilde{u}\sqrt{s}}{=} \lim_{s \to 0}\int_{\mathbb{R}^{k}} R(|g(x,\tilde{u} \sqrt{s},s,)|^{2}) \diff\gamma_k (\tilde{u})= R(|g(x,0,0)|^{2})
$$ by Lebesgue's dominated convergence theorem.

First notice that if $s \mapsto r(s)$ is increasing then \eqref{hyp1} follows. Indeed,  consider any polynomial on $\mathbb{R}^k$ of the form $P(x)=\sum_{\alpha\leq\mathrm{deg}P} c_\alpha H_\alpha(x)$ and define $g(x) = \sum_{\alpha\leq\mathrm{deg}P} c_\alpha x_1^{\alpha_1}\cdots x_k^{\alpha_k}$. Then,
$$r(1) = \int_{\mathbb{R}^k} R\Big(\Big|\int_{\mathbb{R}^k} g(u+iv) \diff\gamma_k(v)\Big|^2\Big)\diff\gamma_k(u) \stackrel{\eqref{hermite1}\wedge\eqref{hermite2}}{=} \int_{\mathbb{R}^k} R(|P(u)|^2)\diff\gamma_k(u)$$
and, similarly, also
$$r(0) = \int_{\mathbb{R}^k} R\Big(\Big|\int_{\mathbb{R}^k} g\big(z(x+iy)\big) \diff\gamma_k(y)\Big|^2\Big)\diff\gamma_k(x)  \stackrel{\eqref{hermite1}\wedge\eqref{hermite2}}{=} \int_{\mathbb{R}^k} R(|T_zP(x)|^2)\diff\gamma_k(x).$$
Therefore, \eqref{hyp1} can be rewritten as $r(0)\leq r(1)$.

Next, we show that the monotonicity of  \eqref{izrdeba} follows from \eqref{inf1}.  Notice that for any function $Q \in C^{2}(\mathbb{R}^{k})$ such that $|Q(x)|<C(1+|x|^{N})$ for some $C, N>0$ we have 
\begin{align}\label{byparts}
\frac{\diff}{\diff s}\left(\int_{\mathbb{R}^{k}} Q(x)  \diff\gamma_k^{(s)}(x)\right) = \int_{\mathbb{R}^{k}} \frac{\Delta Q(x)}{2}  \diff\gamma_k^{(s)}(x).
\end{align}

Indeed by making change of variables $x/\sqrt{s}=y$ we obtain $ \int_{\mathbb{R}^{k}} Q(x)  \diff\gamma_k^{(s)}(x) = \int_{\mathbb{R}^{k}} Q(y\sqrt{s})  \diff\gamma_{k}(y)$. Therefore
\begin{align*}
&\frac{\diff}{\diff s}\left(\int_{\mathbb{R}^{k}} Q(x)  \diff\gamma_k^{(s)}(x)\right) = \frac{\diff}{\diff s}\left(\int_{\mathbb{R}^{k}} \frac{Q(y\sqrt{s})}{2}  \diff\gamma_{k}(y)\right)  = \int_{\mathbb{R}^{k}} \frac{\nabla Q(y\sqrt{s})\cdot y }{2\sqrt{s}}  \diff\gamma_{k}(y)
\end{align*}
and
\begin{align*}
&\int_{\mathbb{R}^{k}} \frac{\Delta Q(x)}{2}  \diff\gamma_k^{(s)}(x) = \int_{\mathbb{R}^{k}} \frac{\Delta Q(y\sqrt{s})}{2}  \diff\gamma_{k}(y).
\end{align*}
Notice that if we denote $v(y):=Q(y\sqrt{s})$  then \eqref{byparts} simply means that $\int_{\mathbb{R}^{k}} (\Delta - y\cdot \nabla )v(y) \diff \gamma_{k}(y)=0$.  The latter follows from integration by parts. Therefore, we have
\begin{align*}
&r'(s) =
\int_{\mathbb{R}^{k}}  \left[ \frac{\Delta_{u}}{2} \left( \int_{\mathbb{R}^{k}}R(|g(x,u,s)|^{2}) \diff\gamma_k^{(1-s)}(x)\right) +  \left(\int_{\mathbb{R}^{k}}R(|g(x,u,s)|^{2}) \diff\gamma_k^{(1-s)}(x)\right)_{s}\right] \diff\gamma_k^{(s)}(u).
\end{align*}
To compute the first term, one differentiation gives
\begin{align*}
\left[ \int_{\mathbb{R}^{k}}R(|g(x,u,s)|^{2}) \diff\gamma_k^{(1-s)}(x)\right]_{u_{j}} = \int_{\mathbb{R}^{k}}R'(|g(x,u,s)|^{2}) [|g(x,u,s)|^{2}]_{u_{j}}\diff\gamma_k^{(1-s)}(x),
\end{align*}
which implies that
\begin{align*}
& \left[ \int_{\mathbb{R}^{k}}R(|(x,u,s)|^{2}) \diff\gamma_k^{(1-s)}(x)\right]_{u_{j}u_{j}} =\\
&\int_{\mathbb{R}^{k}}R''(|g(x,u,s)|^{2}) ([|g(x,u,s)|^{2}]_{u_{j}})^{2}+ R'(|g(x,u,s)|^{2}) [|g(x,u,s)|^{2}]_{u_{j}u_{j}} \diff\gamma_k^{(1-s)}(x).
\end{align*}
For the second term, we have
\begin{align*}
\left(\int_{\mathbb{R}^{k}}R(|g(x,u,s)|^{2}) \diff\gamma_k^{(1-s)}(x)\right)_{s}= -\frac{1}{2} \int_{\mathbb{R}^{k}} \Delta_{x} R(|g(x,u,s)|^{2}) - 2[R(|g(x,u,s)|^{2})]_{s} \; \diff\gamma_k^{(1-s)}(x).
\end{align*}
Thus we get that $r'(s) = \frac{1}{2} \int_{\mathbb{R}^{k}} \int_{\mathbb{R}^{k}}    \tilde{r}(s)\, \diff\gamma_k^{(1-s)}(x) \diff\gamma_k^{(s)}(u)$, where 
\begin{align*}
\tilde{r}(s) =  \Delta_{u} R(|g(x,u,s)|^{2}) -\Delta_{x} R(|g(x,u,s)|^{2})+2[R(|g(x,u,s)|^{2})]_{s}.
\end{align*}
Now, compute 
\begin{align*}
&g(x,u,s)_{u_{j}} = g_{j}(x,u,s):=g_{j} \ \ \ \mbox{and} \ \ \ 
g(x,u,s)_{u_{j}u_{j}} = g_{jj}(x,u,s):=g_{jj},
\end{align*}
where $g_j$ is the $j$-th partial derivative of $g$ and we denote 
\begin{align*}
g_{j}(x,u,s) = \int_{\mathbb{R}^{k}} \int_{\mathbb{R}^{k}} g_{j}((u+iv)+z(x+iy))\diff\gamma_k^{(s)}(v) \diff\gamma_k^{(1-s)}(y),
\end{align*}
and similarly $g_{jj}(x,u,s)$ means that we first differentiate the polynomial $g$ twice in the $j$-th coordinate and then we apply {\em the flow} (\ref{flow1}).  Similarly, we have 
\begin{align*}
&g(x,u,s)_{x_{j}} = z g_{j},
\ \  g(x,u,s)_{x_{j}x_{j}} = z^{2} g_{j} \ \ \mbox{and} \ \ 
g(x,u,s)_{s} = \frac{z^{2}-1}{2}\sum_{j=1}^{k} g_{jj}.
\end{align*}
Next, further abusing the notation, we will denote $g :=g(x,u,s)$.  We have
\begin{align*}
&(|g(x,u,s)|^{2})_{u_{j}} = g_{j} \bar{g} + g\bar{g_{j}}=2  \Re \left( \frac{|g|^{2} g_{j}}{g}\right);\\
&(|g(x,u,s)|^{2})_{u_{j}u_{j}} = g_{jj} \bar{g} + g\bar{g}_{jj} + 2g_{j} \bar{g_{j}} = 2 \Re\left(\frac{|g|^{2}g_{jj}}{g}\right)+2|g_{j}|^{2};\\
&(|g(x,u,s)|^{2})_{x_{j}} = z g_{j} \bar{g} + g\bar{z}\bar{g_{j}} =2  \Re \left( \frac{|g|^{2} z g_{j}}{g}\right);\\
&(|g(x,u,s)|^{2})_{x_{j}x_{j}} = 2  \Re \left( \frac{|g|^{2} z^{2} g_{jj}}{g}\right)+2 |z|^{2}|g_{j}|^{2};\\
&(|g(x,u,s)|^{2})_{s}=  \Re \left( \frac{|g|^{2} (z^{2}-1)\sum_{j=1}^{k}g_{jj}}{g}\right).
\end{align*}

Therefore we obtain 
\begin{align*}
&\Delta_{u} R(|g(x,u,s)|^{2}) -\Delta_{x} R(|g(x,u,s)|^{2})+2[R(|g(x,u,s)|^{2})]_{s} = \\
&R'(|g(x,u,s)|^{2})\big(  \Delta_{u}(|g(x,u,s)|^{2})-\Delta_{x}(|g(x,u,s)|^{2}) +2  (|g(x,u,s)|^{2})_{s}\big)+\\
&R''(|g(x,u,s)|^{2}) \left( \sum_{j=1}^k (|g(x,u,s)|^{2})_{u_{j}}^{2} - (|g(x,u,s)|^{2})_{x_{j}}^{2} \right)=\\
&2 \sum_{j=1}^{k}\;  (1-|z|^{2}) R'(|g|^{2}) |g_{j}|^{2} + 2R''(|g|^{2})\left( (\Re g_{j} \bar{g})^{2} -(\Re z g_{j} \bar{g})^{2} \right).
\end{align*}
Notice that if $|g|=0$ then by (\ref{inf1}) we have $(1-|z|^{2})R'(0) \geq 0$, and hence $r'\geq 0$ in this case. So assume that $|g|>0$. Then, denoting $w = g_{j}\bar{g}$ and $x=|g|^2$, we see that $r'(s)\geq 0$ follows from 
\begin{align*}
(1-|z|^{2}) R'(x)|w|^{2}+2xR''(x)((\Re w)^{2}-(\Re zw)^{2})\geq 0, \quad x \geq 0, \quad w \in \mathbb{C}.
\end{align*}
The latter condition is exactly (\ref{inf1}) and the proof is complete.
\end{proof}

\begin{lemma}\label{red1}
Let $R \in C^{2}((0,\infty))\cap C([0,\infty))$, be such that $R'\geq 0$, and $R(x,\varepsilon):=R(x+\varepsilon)$ satisfies (\ref{assump1}) for each fixed $\varepsilon>0$.  Take any $z \in \mathbb{C}$, $|z|\leq 1$ such that  the inequality 
\begin{align}\label{inf2}
(1-|z|^{2}) R'(x)|w|^{2}+2xR''(x)((\Re w)^{2}-(\Re zw)^{2})\geq 0  
\end{align}
holds for all $w \in \mathbb{C}$, and all $x>0$. Then for all polynomials $P(x)$ on $\mathbb{R}^{k}$ we have 
\begin{align}\label{hyp2}
\int_{\mathbb{R}^{k}} R(|T_{z}P(x)|^{2})\diff\gamma_k(x) \leq \int_{\mathbb{R}^{k}} R(|P(x)|^{2})\diff\gamma_k(x).
\end{align}
\end{lemma}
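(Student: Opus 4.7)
The plan is to deduce Lemma~\ref{red1} from Lemma~\ref{lem:computation} via a simple translation regularization at the origin, followed by a limiting argument based on dominated convergence. The only issue is that $R$ is not assumed to be $C^2$ up to $x=0$; shifting the argument by $\varepsilon>0$ remedies this without destroying the infinitesimal condition \eqref{inf2}.

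For each $\varepsilon>0$, define $R_\varepsilon(x):=R(x+\varepsilon)$ on $[0,\infty)$. Then $R_\varepsilon\in C^2([0,\infty))$, $R_\varepsilon'=R'(\cdot+\varepsilon)\geq 0$, and by hypothesis $R_\varepsilon$ satisfies the polynomial growth bound \eqref{assump1}. Hence $R_\varepsilon$ fulfills the regularity hypotheses of Lemma~\ref{lem:computation}, and the task reduces to verifying the pointwise condition \eqref{inf1} for $R_\varepsilon$ and the same $z$.

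Fix $x\geq 0$ and $w\in\mathbb{C}$ and apply \eqref{inf2} at the point $x+\varepsilon>0$:
\begin{equation*}
(1-|z|^{2})R'(x+\varepsilon)|w|^{2}+2(x+\varepsilon)R''(x+\varepsilon)\bigl((\Re w)^{2}-(\Re zw)^{2}\bigr)\geq 0.
\end{equation*}
We need the same inequality with the multiplier $(x+\varepsilon)$ in front of $R''$ replaced by $x$. Setting $b:=2\bigl((\Re w)^{2}-(\Re zw)^{2}\bigr)$, we split cases on the sign of $R''(x+\varepsilon)\,b$. If $R''(x+\varepsilon)\,b\geq 0$, then $xR''(x+\varepsilon)\,b\geq 0$, and combined with the manifestly non-negative term $(1-|z|^{2})R'(x+\varepsilon)|w|^{2}$ the desired inequality is immediate. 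If $R''(x+\varepsilon)\,b<0$, then since $x\leq x+\varepsilon$ we have $xR''(x+\varepsilon)\,b\geq(x+\varepsilon)R''(x+\varepsilon)\,b$, so the desired inequality follows from \eqref{inf2}. At $x=0$ the $R''$-term vanishes and \eqref{inf1} reduces to $(1-|z|^{2})R'(\varepsilon)|w|^{2}\geq 0$, which holds since $|z|\leq 1$ and $R'\geq 0$. This verifies the hypothesis \eqref{inf1} for $R_\varepsilon$.

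Lemma~\ref{lem:computation} applied to $R_\varepsilon$ then yields
\begin{equation*}
\int_{\mathbb{R}^{k}}R\bigl(|T_{z}P(x)|^{2}+\varepsilon\bigr)\diff\gamma_k(x)\leq\int_{\mathbb{R}^{k}}R\bigl(|P(x)|^{2}+\varepsilon\bigr)\diff\gamma_k(x).
\end{equation*}
Since $T_{z}P$ is a polynomial of degree at most $\deg P$ (the operator $T_z$ acts diagonally on the Hermite basis), and since the growth hypothesis with $\varepsilon=1$ together with continuity of $R$ on $[0,1]$ produces a global polynomial bound $|R(y)|\leq C(1+y^{N})$ valid for all $y\geq 0$, both integrands above are dominated by a polynomial in $|x|$ uniformly for $\varepsilon\in(0,1]$, hence by a fixed $\diff\gamma_k$-integrable function. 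Dominated convergence, combined with the continuity of $R$ on $[0,\infty)$, lets us send $\varepsilon\to 0^{+}$ and produces \eqref{hyp2}.

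The main (and essentially the only non-routine) obstacle is the pointwise step in the second paragraph: the shifted condition \eqref{inf1} for $R_\varepsilon$ is \emph{not} a direct instance of \eqref{inf2}, because the weight $(x+\varepsilon)$ in front of $R''$ must be replaced by the smaller $x$. The sign-based case analysis above handles this transfer; everything else is regularization plus dominated convergence.
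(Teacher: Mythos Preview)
Your proof is correct and follows essentially the same approach as the paper: regularize by the shift $R_\varepsilon(x)=R(x+\varepsilon)$, verify that the infinitesimal condition \eqref{inf1} persists for $R_\varepsilon$, apply Lemma~\ref{lem:computation}, and pass to the limit by dominated convergence. The only cosmetic differences are that the paper handles the transfer from $(x+\varepsilon)R''$ to $xR''$ by dividing through by $x+\varepsilon$ and using $\tfrac{1}{x+\varepsilon}\le\tfrac{1}{x}$ on the nonnegative first term (rather than your sign-case analysis), and it bounds the left side via the monotonicity $R(|T_zP|^2)\le R(|T_zP|^2+\varepsilon)$ instead of applying dominated convergence on both sides.
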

\begin{proof}
For each $\varepsilon>0$ we consider the function $R(x,\varepsilon) :=R(x+\varepsilon)$. We claim that $R(x,\varepsilon)$ satisfies (\ref{inf1}). Indeed, applying (\ref{inf2}) at points $x+\varepsilon$, we obtain 
\begin{align*}
\frac{(1-|z|^{2})}{x+\varepsilon} R'(x+\varepsilon)|w|^{2}+2R''(x+\varepsilon)((\Re w)^{2}-(\Re zw)^{2})\geq 0.  
\end{align*}
Since $R'(x+\varepsilon)\geq 0$, $1-|z|^{2}\geq 0$, and $\frac{1}{x+\varepsilon}\leq \frac{1}{x}$ for $x>0$,  we deduce that 
\begin{align*}
\frac{(1-|z|^{2})}{x} R'(x+\varepsilon)|w|^{2}+2R''(x+\varepsilon)((\Re w)^{2}-(\Re zw)^{2})\geq 0. 
\end{align*}
The latter means that $x \mapsto R(x,\varepsilon)$ satisfies (\ref{inf1}) for all $\varepsilon >0$ (the case $x=0$ in (\ref{inf1}) is trivial because $R'(\varepsilon)\geq 0$ by the assumption in the lemma). Therefore using Lemma \ref{lem:computation}, we get 
\begin{align*}
\int_{\mathbb{R}^{k}} R(|T_{z}P(x)|^{2})\diff\gamma_k(x) \leq \int_{\mathbb{R}^{k}} R(\varepsilon+|T_{z}P(x)|^{2})\diff\gamma_k(x) \leq \int_{\mathbb{R}^{k}} R(\varepsilon+|P(x)|^{2})\diff\gamma_k(x).
\end{align*}
Next, we take $0<\varepsilon<1$ and $\varepsilon \to 0$. Notice that $\lim_{\varepsilon \to 0}R(\varepsilon+|P(x)|^{2}) = R(|P(x)|^{2})$ and   $R(\varepsilon+|P(x)|^{2})\leq R(1+|P(x)|^{2}) \in L^{1}(\mathbb{R}^{k}, \diff\gamma_k)$. Therefore we can apply Lebesgue's dominated convergence theorem and this finishes the proof of the lemma.
\end{proof}

\begin{proposition}\label{step1}
Let $B \in C([0,\infty))\cap C^{2}((0,\infty))$ be such that $B', B'' >0$. Assume $x\mapsto B(\sqrt{x+\varepsilon})$ satisfies (\ref{assump1}) for each fixed $\varepsilon>0$. Then 
\begin{align}\label{hyp3}
\int_{\mathbb{R}^{k}} B(|T_{z}P(x)|)\diff\gamma_k(x) \leq \int_{\mathbb{R}^{k}} B(|P(x)|)\diff\gamma_k(x),
\end{align}
holds for all polynomials $P(x)$ on $\mathbb{R}^{k}$, and all $k\geq 1$, if $z$ belongs to the lens 
\begin{align}\label{lenzint}
\left| 2z  \pm i \sqrt{c_{B}-2} \right| \leq \sqrt{c_{B}+2}, 
\end{align}
where $c_{B} :=\sup_{s \in (0,\infty)}\left\{ \frac{sB''(s)}{B'(s)}+\frac{B'(s)}{sB''(s)}\right\}$. 
\end{proposition}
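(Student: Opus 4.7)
The plan is to invoke Lemma~\ref{red1} with the auxiliary function $R(x) := B(\sqrt{x})$, so that (\ref{hyp3}) reads literally as the hypercontractive estimate (\ref{hyp2}) for this $R$. The hypotheses of Lemma~\ref{red1} are easily checked: $R \in C([0,\infty)) \cap C^{2}((0,\infty))$ and $R'(x) = B'(\sqrt{x})/(2\sqrt{x}) > 0$ follow from the assumptions on $B$, while the polynomial growth of $R(x+\varepsilon) = B(\sqrt{x+\varepsilon})$ is assumed outright. Everything thus reduces to verifying the pointwise condition (\ref{inf2}) for $R$, at every $x > 0$ and $w \in \mathbb{C}$, precisely when $z$ belongs to the stated lens.

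Setting $s := \sqrt{x} > 0$, the chain rule gives $R'(x) = \tfrac{B'(s)}{2s}$ and $2xR''(x) = \tfrac{sB''(s)-B'(s)}{2s}$, so after multiplying (\ref{inf2}) by $2s > 0$ the inequality to check is
$$(1-|z|^{2})\,B'(s)\,|w|^{2} + \bigl(sB''(s)-B'(s)\bigr)\bigl((\Re w)^{2}-(\Re zw)^{2}\bigr) \geq 0.$$
Writing $w = u+iv$, $z = a+ib$ and expanding $(\Re w)^{2}-(\Re zw)^{2} = (1-a^{2})u^{2}+2ab\,uv-b^{2}v^{2}$ turns this into positive semidefiniteness of the $2\times 2$ symmetric matrix
$$M(s,z) = \begin{pmatrix} (1-a^{2})\gamma - Ab^{2} & (\gamma-A)\,ab \\[2pt] (\gamma-A)\,ab & (1-a^{2})A - \gamma b^{2} \end{pmatrix},$$
with the shorthand $A := B'(s) > 0$, $\gamma := sB''(s) > 0$.

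The main calculation is the determinant. Expanding $M_{11}M_{22} - M_{12}^{2}$ and using the algebraic identity $(1-a^{2})^{2}+2a^{2}b^{2}+b^{4} = (1-|z|^{2})^{2}+2b^{2}$, one finds that
$$\det M(s,z) = A\gamma\bigl[(1-|z|^{2})^{2}+2b^{2}\bigr] - (A^{2}+\gamma^{2})\,b^{2}.$$
Dividing by $A\gamma > 0$, the condition $\det M \geq 0$ is equivalent to $(1-|z|^{2})^{2} \geq \bigl(\tfrac{sB''(s)}{B'(s)}+\tfrac{B'(s)}{sB''(s)}-2\bigr)b^{2}$ for every $s > 0$; taking the supremum in $s$ (with $c_{B}\geq 2$ by AM--GM) yields $(1-|z|^{2})^{2} \geq (c_{B}-2)\,b^{2}$. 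Combined with $\mathrm{tr}\,M = (A+\gamma)(1-|z|^{2}) \geq 0$, i.e. $|z|\leq 1$, these two give $M \succeq 0$.

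It remains to match this to the lens. A short algebraic check shows that the conjunction of $|z|\leq 1$ and $(1-|z|^{2})^{2} \geq (c_{B}-2)b^{2}$ is equivalent to $|z|^{2}+|b|\sqrt{c_{B}-2} \leq 1$, which is in turn the conjunction of the two disk inequalities $|2z \pm i\sqrt{c_{B}-2}| \leq \sqrt{c_{B}+2}$, i.e. condition (\ref{lenzint}). Lemma~\ref{red1} then delivers (\ref{hyp3}). The only genuinely delicate point is the collapse of the $a$-dependence in $\det M$ into $|z|^{2}$: this happens because the cross term $M_{12}^{2}$ cancels precisely against the cross terms produced in $M_{11}M_{22}$, and it is exactly this collapse that lets the supremum over $s$ land on the quantity $c_{B}$.
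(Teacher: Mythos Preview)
Your proof is correct and follows essentially the same route as the paper: apply Lemma~\ref{red1} with $R(x)=B(\sqrt{x})$, reduce (\ref{inf2}) to positive semidefiniteness of a $2\times 2$ matrix in $(\Re w,\Im w)$, and identify the trace/determinant conditions with the lens (\ref{lenzint}). The only cosmetic difference is that the paper first divides through by $B'(s)$ and works with the single ratio $A(s)=sB''(s)/B'(s)$, whereas you keep $A=B'(s)$ and $\gamma=sB''(s)$ separate; the resulting matrices differ by the positive scalar $B'(s)$ and the determinant computations collapse to the same inequality $(1-|z|^{2})^{2}\geq (c_{B}-2)\,(\Im z)^{2}$.
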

\begin{proof}
By Lemma~\ref{red1} applied to $R(x) = B(\sqrt{x})$ we deduce that (\ref{hyp3}) holds provided that (\ref{inf2}) holds. Next, condition (\ref{inf2}) for $R(x)$ is equivalent to 
\begin{align*}
 \frac{sB''(s)}{B'(s)}\left( (\Re w)^{2} - (\Re wz)^{2}\right) +(\Im w)^{2} - (\Im wz)^{2} \geq 0,
\end{align*}
holding for all $s>0$ and $w\in \mathbb{C}$. The latter means that if we set $z=x+iy$, and $A(s) := \frac{sB''(s)}{B'(s)}$ then we must have 
\begin{align*}
\begin{pmatrix}
A(s)-A(s)x^{2}-y^{2} & xy(A(s)-1)\\
xy(A(s)-1) & 1-A(s)y^{2}-x^{2}
\end{pmatrix}\geq 0. 
\end{align*}
The trace of the matrix is $(1-x^{2}-y^{2})(A(s)+1)$, which is nonnegative if and only if $|z|\leq 1$. So it remains to study the sign of the determinant. If $y=0$ then there is nothing to check, so assume that $y\neq 0$.  The non-negativity of the determinant can be rewritten as
\begin{align*}
&A(s)+\frac{1}{A(s)} \leq \frac{(x^{2}-1)^{2}}{y^{2}}+y^{2}+2x^{2},
\end{align*}
for every $s>0$, which is equivalent to
\begin{align*}
&c_{B}-2 \leq \frac{(1-x^{2}-y^{2})^{2}}{y^{2}}
\end{align*}
and also
\begin{align*}
&|y|\sqrt{c_B-2}\leq 1-x^{2}-y^{2}.
\end{align*}
The latter inequality can be rewritten as (\ref{lenzint}). This finishes the proof of the proposition. 
\end{proof}

\subsection{Step 2. Szeg\"o Theorem}
In what follows we will be assuming that $B \in C([0,\infty))\cap C^{2}((0,\infty))$ is such that $B', B''>0$, and $x \mapsto B(\sqrt{x+\varepsilon})$ satisfies (\ref{assump1}) for each fixed $\varepsilon>0$. Next, let us consider the lens domain in $\mathbb{C}$ associated to $B$,
\begin{align}
\Omega_{B} := \left \{ z \in \mathbb{C} \; :\; \left| z \pm i \frac{\sqrt{c_{B}-2}}{2} \right| \leq \frac{\sqrt{c_{B}+2}}{2} \right\}. 
\end{align}

\begin{figure}[ht]
\centering
\includegraphics[scale=1]{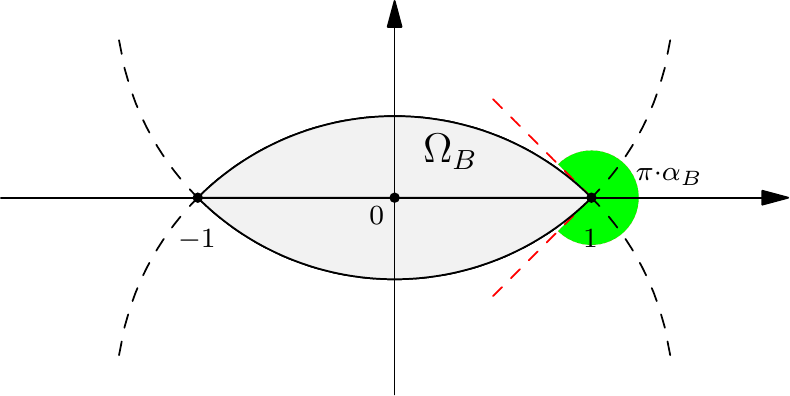}
\caption{The domain $\Omega_{B}$ and the angle $\alpha_{B} = 1+\frac{2}{\pi}\arctan\left(\frac{\sqrt{c_{B}-2}}{2}\right)$}
\label{fig:dom}
\end{figure}

The domain $\Omega_{B}$ has an exterior angle at the point $(1,0)$ which we are going to denote by $\pi \cdot \alpha_{B}$. A direct calculation reveals that  
\begin{align}
\alpha_{B} = 1+\frac{2}{\pi}\arctan\left(\frac{\sqrt{c_{B}-2}}{2}\right) \in [1,2].
\end{align}

We will need the following Markov-type inequality in the complex domain $\Omega_{B}$. For a compact set $K\subset\mathbb{C}$ and a polynomial $P$, we denote $\|P\|_{C(K)} = \sup_{z\in K}|P(z)|$ its supremum norm \mbox{in $K$.}
\begin{proposition}\label{Gabor1}
For any polynomial $P(w) = \sum_{j=0}^{n} a_{j} w^{j}$ with coefficients  $a_{j} \in \mathbb{C}$, we have 
\begin{align}\label{Mark1}
|P'(1)| \leq 10  n^{\alpha_{B}} \|P\|_{C(\Omega_{B})}.
\end{align}
\end{proposition}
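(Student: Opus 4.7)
The strategy is the classical Szeg\H{o}--Andrievskii approach to Markov-type inequalities at boundary corners of compact sets in $\mathbb{C}$: combine the Bernstein--Walsh polynomial inequality with Cauchy's integral formula on a level curve of the Green function of $\mathbb{C}\setminus\Omega_B$. The exponent $\alpha_B$ will appear as the reciprocal of the H\"older exponent of this Green function at the corner $z=1$, which in turn is determined by the interior angle of $\mathbb{C}\setminus\overline{\Omega_B}$ at that point.

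First I would analyze the geometry of $\Omega_B$ and build an explicit conformal map for its complement. The set $\Omega_B$ is the intersection of two closed disks whose centers $\pm i\sqrt{c_B-2}/2$ lie on the imaginary axis and which both pass through $\pm 1$. A direct tangent-line computation at $z=1$ gives interior angle $\pi(2-\alpha_B)$ of $\Omega_B$ there, and hence interior angle $\pi\alpha_B$ of $\mathbb{C}\setminus\overline{\Omega_B}$ at $z=1$. An explicit conformal map $\phi : \mathbb{C}\setminus\Omega_B \to \{|w|>1\}$ with $\phi(\infty)=\infty$ is then obtained as the composition of the M\"obius transformation $z \mapsto w_1 := (z-1)/(z+1)$ (which sends $\Omega_B$ to an unbounded wedge of opening angle $\pi(2-\alpha_B)$ with vertex at $0$, opening along the negative real axis), the power map $w_1 \mapsto w_2 := w_1^{1/\alpha_B}$ on the complementary wedge (straightening it into the right half-plane), and the M\"obius map $w_2 \mapsto (w_2+1)/(w_2-1)$ of the right half-plane onto $\{|w|>1\}$. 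From the resulting formula, $\phi(z)-\phi(1) \sim c(z-1)^{1/\alpha_B}$ near $z=1$, so the Green function $g(z):=\log|\phi(z)|$ satisfies
$$g(z) \leq C_1 |z-1|^{1/\alpha_B}$$
for $z$ in a neighborhood of $1$ outside $\Omega_B$, with $C_1$ uniformly bounded in $\alpha_B \in [1,2]$.

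Next, the Bernstein--Walsh inequality gives $|P(z)| \leq e^{ng(z)}\|P\|_{C(\Omega_B)}$ for every $z\notin\Omega_B$ and every polynomial $P$ of degree at most $n$. Take $\rho := 1/n$ and the level curve $\Gamma := \{z\in\mathbb{C}\setminus\Omega_B : g(z)=\rho\}$, which is a Jordan curve surrounding $\Omega_B$ (hence surrounding $1$) on which $|P| \leq e\|P\|_{C(\Omega_B)}$. Cauchy's integral formula then yields
$$|P'(1)| = \Big|\frac{1}{2\pi i}\oint_\Gamma \frac{P(z)}{(z-1)^2}\,dz\Big| \leq \frac{e\,\|P\|_{C(\Omega_B)}}{2\pi}\oint_\Gamma \frac{|dz|}{|z-1|^2}.$$

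The remaining step is to bound the arc-length integral by $O(n^{\alpha_B})$. Parametrize $\Gamma$ near $1$ via $\phi$ by $w = e^\rho e^{i(\pi+\tau)}$ (so $w\to\phi(1)=-1$ as $\tau\to 0$). Using the local inversion $\phi^{-1}(w) - 1 \sim c'(w - \phi(1))^{\alpha_B}$, one has $|z-1| \sim |w-\phi(1)|^{\alpha_B}$ and $|dz| \sim |w-\phi(1)|^{\alpha_B-1}\,d\tau$. Substituting $\tau = \rho u$ reduces the contribution near the corner to $\rho^{-\alpha_B}\int_{\mathbb{R}}(1+u^2)^{-(\alpha_B+1)/2}\,du$, which is uniformly bounded in $\alpha_B\in[1,2]$ (dominated by $\pi$); the remainder of $\Gamma$ contributes only $O(1)$. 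Altogether one obtains $|P'(1)| \leq C n^{\alpha_B}\|P\|_{C(\Omega_B)}$ with an absolute constant $C$. The main obstacle is numerical: abstract corner Green-function arguments give only an implicit $O(n^{\alpha_B})$ bound with $\alpha_B$-dependent constant, so extracting the clean universal factor $10$ uniformly in $\alpha_B\in[1,2]$ requires careful bookkeeping through the explicit conformal map above and sharp estimation of the arc-length integrals, rather than an abstract appeal to a general corner Markov estimate.
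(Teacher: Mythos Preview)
Your strategy is sound and closely parallels the paper's own argument: both build the explicit conformal map $\phi:\mathbb{C}\setminus\Omega_B\to\{|w|>1\}$ as a M\"obius--power--M\"obius composition, invoke the Bernstein--Walsh/maximum-principle bound $|P(z)|\le |\phi(z)|^n\|P\|_{C(\Omega_B)}$ on $\Omega_B^c$, and finish with Cauchy's integral formula. The one genuine difference is the choice of contour. You integrate over the Green level curve $\Gamma=\{g=1/n\}$, on which $|P|\le e\,\|P\|_{C(\Omega_B)}$ uniformly; the paper instead integrates over the small circle $C_\delta(1)$ of radius $\delta=n^{-\alpha_B}$ centred at $z=1$, splitting it into the arc lying in $\Omega_B$ (where trivially $|P|\le\|P\|_{C(\Omega_B)}$) and the complementary arc in $\Omega_B^c$ (where $|P|\le|\phi|^n$ is used). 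The circle contour makes the numerical constant much easier to pin down: the parametrization is trivial, and the only analytic input is the elementary estimate $|\phi(1+\delta e^{i\pi\theta})|\le 1+2\delta^{1/\alpha_B}$ on the relevant arc, after which the factor $10$ falls out of a short explicit computation. Your level-curve route is the more classical Szeg\H{o}--Andrievskii argument and certainly yields the correct exponent $n^{\alpha_B}$; but, as you yourself flag, promoting the local asymptotics $|z-1|\sim|w-\phi(1)|^{\alpha_B}$ to two-sided inequalities with constants uniform in $\alpha_B\in[1,2]$, and controlling all of $\Gamma$ rather than just its germ at the corner, is where the bookkeeping gets heavy. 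If a clean universal constant is the target, the paper's circle contour is the cleaner device.
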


Szeg\"o was the first who investigated how the geometry of a domain in the complex plane affects the growth rate of the constant in Markov's inequality. The reader can find the bound $\|P'\|_{C(\Omega_{B})} \leq C(B) n^{\alpha_{B}} \|P\|_{C(\Omega_{B})}$ in \cite{S1}, where the constant $C(B)$ depends on the domain $\Omega_{B}$. We claim here that (\ref{Mark1}) holds with a universal constant, say $C(B)=10$,  which is independent of $B$. We could not locate the proof of this claim in the literature, so we include it here for the readers' convenience.

\begin{proof}[Proof of Proposition \ref{Gabor1}] Without loss of generality assume that $n >10$, otherwise we can use the Markov inequality $|P'(1)|\leq n^{2} \| P\|_{C([-1,1])} \leq 10 n^{\alpha_{B}}  \| P\|_{C(\Omega_{B})}$. 

We map conformally $\Omega^{c}_{B}$, the complement of $\Omega_{B}$, onto $\mathbb{D}^{c}$, the complement of the unit disk, using the map
\begin{align*}
\varphi(z) = \varphi_{3}\circ\varphi_{2}\circ \varphi_{1}(z),
\end{align*}
where 
\begin{align*}
\varphi_{1}(z) = \frac{z+1}{z-1};\quad \varphi_{2}(z) = z^{1/\alpha_{B}}; \quad \varphi_{3}(z)=\frac{z+1}{z-1}.
\end{align*}
Notice that the M\"obius transformation  $\varphi_{1}(z)$  maps $\varphi_{1}(-1)=0$, $\varphi_{1}(1) = \infty$ and $\varphi_1(\infty)=1$, thus $\varphi_{1}(\Omega_{B}^c)$ is the sector centered at $z=0$ with angle $\pi \alpha_{B}$ and symmetric with respect to the positive $x$-semiaxis. Next, $\varphi_{2}(z)$ maps the sector to the right half plane $\Re z \geq 0$. Finally, $\varphi_{3}(z)$ maps the right half plane to the complement of the unit disk. It follows that 
\begin{align} \label{eq:varphi}
\varphi(z) = \frac{\left(\frac{z+1}{z-1}\right)^{1/\alpha_{B}}+1}{\left(\frac{z+1}{z-1}\right)^{1/\alpha_{B}}-1} = \alpha_{B} z + O\left(\frac{1}{z}\right), \qquad \mbox{as }  z \to \infty.
\end{align}
Next, let $P(z)$ be any polynomial of degree at most $n$ on $\Omega_{B}$ such that $\|P\|_{C(\Omega_{B})}\leq 1$ and consider the analytic function 
$$
\Omega_B^c\ni z \longmapsto  \psi(z):= \frac{P(z)}{\varphi^{n}(z)}.
$$
The function $\psi$ is regular at $z = \infty$ and bounded in absolute value by $1$ on $\partial\Omega_B$. Thus, by the maximum principle, we get
\begin{align*}
|P(z)| \leq |\varphi(z)|^{n} \qquad \text{for all} \ z \in \Omega^{c}_{B}. 
\end{align*}

Next, we will estimate $|P'(1)|$. Fix $0<\delta< \frac{1}{10}$ to be determined later and let $C_{\delta}(1)$ be the circle of radius $\delta$ centered at the point $O(1,0)$. 
Consider the arc $C_{\delta}(1) \setminus \Omega_{B}$, and let $A,B$ be its endpoints. Let $\pi\cdot \alpha'_{B} = 2\pi - \angle AOB$ where the angle $\angle AOB$ is measured in radians.  It follows from the alternate segment theorem and the law of cosines that 
\begin{equation} \label{eq:alpha'}
\alpha'_{B} = \alpha_{B} + \frac{1}{\pi}\arccos\left(1- \frac{\delta^{2}}{2R^{2}} \right) \in (1,2],
\end{equation}
where $R$ is the radius of the circles defining $\Omega_{B}$. By Cauchy's integral formula, we have 
\begin{align*}
|P'(1)| & = \left| \frac{1}{2\pi i} \oint_{C_{\delta}(1)} \frac{P(\zeta)}{(\zeta -1)^{2}} d\zeta \right| = \left| \frac{1}{2\delta} \int_{0}^{2 } \frac{P(1+\delta e^{i\pi \theta})}{e^{i \pi \theta}}  d\theta \right| \\
& \leq \frac{2-\alpha'_B}{2\delta} + \frac{ \alpha'_B }{2\delta}  \max_{\theta \in [-\alpha'_{B}/2, \alpha'_{B}/2]} | \varphi(1+\delta e^{i \pi \theta})|^{n} \leq \frac{1+\max_{\theta \in [-\alpha'_{B}/2, \alpha'_{B}/2]} | \varphi(1+\delta e^{i \pi \theta})|^{n}}{\delta}.
\end{align*}
We will need the following technical lemma.

\begin{lemma} \label{lem:technical} For all $\gamma \in [1,2]$ and $\delta, 0<\delta< \frac{1}{10}$, we have 
\begin{align}
\max_{\theta \in [-\gamma/2, \gamma/2]} | \varphi(1+\delta e^{i \pi \theta})|\leq 1+2 \delta^{1/\gamma},
\end{align}
where $\varphi$ is given by \eqref{eq:varphi}.
\end{lemma}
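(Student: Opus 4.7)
The plan is to reduce the lemma to an elementary one-variable inequality by composing the three factors of $\varphi$ and applying the triangle and reverse triangle inequalities. Throughout I write $w = 1 + \delta e^{i\pi\theta}$, and interpret $\varphi$ as in \eqref{eq:varphi} with the parameter $\alpha_B$ replaced by $\gamma$. I would first compute
\[
\varphi_{1}(w) = \frac{(1+\delta e^{i\pi\theta})+1}{(1+\delta e^{i\pi\theta})-1} = 1 + \frac{2}{\delta}e^{-i\pi\theta},
\]
so the reverse triangle inequality gives the uniform lower bound $|\varphi_1(w)| \geq 2/\delta - 1 > 19$ (using $\delta < 1/10$). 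Since $\varphi_2(z) = z^{1/\gamma}$ preserves moduli to the $1/\gamma$ power, this yields $|\varphi_2(\varphi_1(w))| \geq (2/\delta - 1)^{1/\gamma} > 1$.

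Next, using $\varphi_3(z) - 1 = 2/(z-1)$ and writing $\zeta = \varphi_2(\varphi_1(w))$, the triangle and reverse triangle inequalities (justified by $|\zeta|>1$) give
\[
|\varphi(w)| = \left|1 + \frac{2}{\zeta - 1}\right| \leq 1 + \frac{2}{|\zeta| - 1} \leq 1 + \frac{2}{(2/\delta - 1)^{1/\gamma} - 1}.
\]
So the claimed bound $|\varphi(w)| \leq 1 + 2\delta^{1/\gamma}$ reduces to $(2/\delta - 1)^{1/\gamma} - 1 \geq \delta^{-1/\gamma}$. Multiplying through by $\delta^{1/\gamma}$ and using the identity $\delta^{1/\gamma}(2/\delta - 1)^{1/\gamma} = (2 - \delta)^{1/\gamma}$, this tidies up to the clean numerical inequality
\[
(2 - \delta)^{1/\gamma} \geq 1 + \delta^{1/\gamma}.
\]

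Finally, I would verify this inequality for $\gamma \in [1, 2]$ and $\delta \in (0, 1/10)$. Since $0 < \delta < 1 < 2 - \delta$, a direct differentiation shows $\gamma \mapsto (2-\delta)^{1/\gamma}$ is strictly decreasing while $\gamma \mapsto 1 + \delta^{1/\gamma}$ is strictly increasing on $[1, 2]$, so only the endpoint $\gamma = 2$ needs to be checked: squaring $\sqrt{2-\delta} \geq 1 + \sqrt{\delta}$ gives $1 - 2\delta \geq 2\sqrt{\delta}$, and setting $t = \sqrt{\delta}$ yields the quadratic $2t^2 + 2t - 1 \leq 0$, which holds for $t \leq (\sqrt{3}-1)/2$, i.e., $\delta \leq (2-\sqrt{3})/2 \approx 0.134$. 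Our assumption $\delta < 1/10$ is comfortably inside this range. The argument is essentially one explicit calculation; the only ``obstacle'' is spotting the identity $\delta^{1/\gamma}(2/\delta - 1)^{1/\gamma} = (2 - \delta)^{1/\gamma}$, which turns a messy estimate into a tidy monotonicity check in $\gamma$.
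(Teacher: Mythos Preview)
Your proof is correct and follows essentially the same route as the paper's: both compute $\varphi(1+\delta e^{i\pi\theta}) = 1 + \dfrac{2\delta^{1/\gamma}}{(2e^{-i\pi\theta}+\delta)^{1/\gamma}-\delta^{1/\gamma}}$ (your step-by-step composition through $\varphi_1,\varphi_2,\varphi_3$ is just a reorganization of this identity), apply the reverse triangle inequality to reduce to the numerical bound $(2-\delta)^{1/\gamma}-\delta^{1/\gamma}\ge 1$, and then check this at the endpoint $\gamma=2$. Your write-up is in fact a bit more thorough, since you explicitly justify the monotonicity in $\gamma$ and compute the sharp threshold $\delta\le(2-\sqrt{3})/2$, whereas the paper simply evaluates at $\gamma=2,\ \delta=1/10$.
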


\begin{proof} Notice that
\begin{align*}
\varphi(1+\delta e^{i\pi\theta}) \stackrel{\eqref{eq:varphi}}{=} \frac{\left(\frac{2+\delta e^{i \pi \theta}}{\delta e^{i \pi \theta}}\right)^{1/\gamma}+1}{\left(\frac{2+\delta e^{i \pi \theta}}{\delta e^{i \pi \theta}}\right)^{1/\gamma}-1} = 1 +\frac{2\delta^{1/\gamma}}{\left(2e^{- i \pi \theta}+\delta \right)^{1/\gamma}-\delta^{1/\gamma}}.
\end{align*}
Now, since $\gamma \in [1,2]$, we see that  for every $\delta< 1/10$ we have 
\begin{align*}
|\big(2e^{- i \pi \theta}+\delta \big)^{1/\gamma}-\delta^{1/\gamma}| \geq (2-\delta)^{1/\gamma} - \delta^{1/\gamma} \geq \left(2-\frac{1}{10}\right)^{1/2} - \left(\frac{1}{10}\right)^{1/2} > 1
\end{align*}
Thus we obtain that 
\begin{align*}
\max_{\theta \in [-\gamma/2, \gamma/2]} | \varphi(1+\delta e^{i \pi \theta})| \leq 1+2\delta^{1/\gamma},
\end{align*}
which completes the proof of the lemma.
\end{proof}

\noindent {\it Proof of Proposition \ref{Gabor1} (continued).} It follows from \eqref{eq:alpha'} that since $R\geq1$, for any $0<\delta<1/10$,
$$
\alpha_B' \leq \alpha_B + \frac{1}{\pi}\arccos\left(1-\frac{\delta^{2}}{2}\right) \leq \alpha_B + \frac{11\delta}{10\pi},
$$
where the last inequality follows from the fact that $f(x):=\cos\left(\frac{11x}{10}\right) -1 +\frac{x^{2}}{2}\leq 0$ for $x \in [0,1/10]$. Next, choosing $\delta=n^{-\alpha_B}$ we thus get that
\begin{align*}
\frac{\alpha'_B}{\alpha_B} \leq   1+ \frac{1}{ \pi } \arccos\left(1-\frac{1}{2n^{2}}\right) \leq 1+ \frac{11}{10\pi n}
\end{align*}
Therefore, applying Lemma \ref{lem:technical}, the inequality $\ln (1+x) \leq x$, $x>0$, we get 
\begin{align*}
|P'(1)| & \leq n^{\alpha_{B}}\left(1+\exp\left( n \ln \left(1+\frac{2}{n^{\alpha_{B}/\alpha'_{B}}} \right)\right) \right) \leq n^{\alpha_B} \left( 1+ \exp\left(2n^{1-\frac{\alpha_{B}}{\alpha'_{B}}}\right) \right)  \\
& \leq n^{\alpha_B} \left( 1+ \exp\left(2n^{\frac{1}{1+\frac{10 \pi n}{11}}}\right) \right) \leq n^{\alpha_B} \left( 1+ \exp\left(2\cdot 11^{\frac{1}{1+10 \pi}}\right) \right) < 10n^{\alpha_{B}}.
\end{align*}
Here we have used the fact that $n \mapsto \frac{\ln n}{1+cn}$ is decreasing for $n\geq 11$ provided that $c>\frac{1}{11(\ln(11)-1)}$.
This completes the proof.
\end{proof}

\subsection{Step 3. A duality argument and the proof of Theorem \ref{mth04}}\label{bolo1} To prove Theorem \ref{mth04}, we will use a duality argument which is inspired by a similar argument of Figiel \cite[Theorem~14.6]{MS86}. 

\begin{lemma}\label{step3}
Fix a function $B\in C([0,\infty))\cap C^2((0,\infty))$. For every positive integer $n$ there exists a complex Radon measure $\diff\mu$ on $\Omega_{B}$ such that 
\begin{align} \label{eq:measure}
\int_{\Omega_{B}} z^{\ell} \diff\mu(z) = \ell, \quad \text{for all} \quad \ell=0, \ldots, n,
\end{align}
and $\int_{\Omega_B} \diff |\mu| \leq 10 n^{\alpha_{B}}$. 
\end{lemma}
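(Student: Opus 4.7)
The plan is to realize the linear functional $P \mapsto P'(1)$ on the space of polynomials of degree at most $n$ as integration against a suitable complex Radon measure on $\Omega_B$, via Hahn--Banach extension and the Riesz representation theorem. The bound on total variation will come directly from Proposition~\ref{Gabor1}, and the moment conditions \eqref{eq:measure} will be automatic from the definition of the functional.

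More precisely, I would proceed as follows. Let $V_n \subset C(\Omega_B)$ denote the finite-dimensional subspace of polynomials of degree at most $n$, equipped with the restriction of the supremum norm $\|\cdot\|_{C(\Omega_B)}$. Define the linear functional $L : V_n \to \mathbb{C}$ by
\begin{equation*}
L(P) := P'(1) = \sum_{\ell=0}^n \ell\, a_\ell, \quad \text{where } P(z) = \sum_{\ell=0}^n a_\ell z^\ell.
\end{equation*}
Proposition~\ref{Gabor1} says precisely that
\begin{equation*}
|L(P)| = |P'(1)| \leq 10\, n^{\alpha_B} \|P\|_{C(\Omega_B)} \quad \text{for all } P \in V_n,
\end{equation*}
so $L$ has operator norm at most $10\, n^{\alpha_B}$ as a functional on the normed space $(V_n, \|\cdot\|_{C(\Omega_B)})$.

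By the Hahn--Banach theorem, $L$ extends to a continuous linear functional $\widetilde{L}$ on $C(\Omega_B)$ with the same norm, i.e.\ $\|\widetilde{L}\| \leq 10\, n^{\alpha_B}$. The Riesz representation theorem for the dual of $C(\Omega_B)$ (where $\Omega_B$ is a compact subset of $\mathbb{C}$) then provides a complex Radon measure $\mu$ on $\Omega_B$ such that
\begin{equation*}
\widetilde{L}(f) = \int_{\Omega_B} f(z) \, \diff\mu(z) \quad \text{for all } f \in C(\Omega_B), \qquad \int_{\Omega_B} \diff|\mu| = \|\widetilde{L}\| \leq 10\, n^{\alpha_B}.
\end{equation*}
Specializing to $f(z) = z^\ell$ for $0 \leq \ell \leq n$ gives $\int_{\Omega_B} z^\ell \diff\mu(z) = L(z^\ell) = \ell$, which is \eqref{eq:measure}.

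The only nontrivial ingredient is Proposition~\ref{Gabor1}, which has already been established; everything else is standard functional analysis. The main obstacle would have been the Markov-type inequality with a universal constant on the lens $\Omega_B$, but since that is already in hand the duality step is essentially formal.
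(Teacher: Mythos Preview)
Your proof is correct and follows essentially the same approach as the paper: define the functional $P\mapsto P'(1)$ on polynomials of degree at most $n$, bound its norm via Proposition~\ref{Gabor1}, extend by Hahn--Banach, and represent by a complex Radon measure via Riesz. The paper's argument is identical in structure and content.
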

\begin{proof}
Fix a positive integer $n$, and consider the functional $\psi$ on the space of polynomials of degree at most $n$ on $\Omega_{B}$, that is,  $\mathcal{P}_{n}:=\mathrm{span}_{\mathbb{C}}\{ z^{\ell}, \; \ell=0,\ldots, n\} \subset C(\Omega_B)$ given by 
\begin{align}\label{Lfun}
\psi\left( \sum_{\ell=0}^{n} a_{\ell} z^{\ell}\right) =\sum_{\ell=0}^{n}\ell a_{\ell} \quad \text{for all} \  a_{j} \in \mathbb{C}. 
\end{align}
In other words, if $P$ is a polynomial of degree at most $n$ then, $\psi(P) = P'(1)$. It follows from Proposition \ref{Gabor1} that for every such $P$, we have
\begin{align}\label{hanB1}
|\psi(P)|\leq 10 n^{\alpha_{B}} \| P\|_{C(\Omega_{B})}.
\end{align}
Therefore, by the Hahn--Banach theorem, the functional $\psi \in (\mathcal{P}_{n})^{*}$ can be extended to a functional $\Psi \in C(\Omega_{B})^{*}$ with $\|\Psi\|_{(C(\Omega_{B}))^{*}} \leq 10 n^{\alpha_{B}}$. However, by the Riesz representation theorem, the space $C(\Omega_{B})^{*}$ can be identified with the Banach space of Radon measures on $\Omega_{B}$ equipped with the total variation norm and this completes the proof of the lemma.
\end{proof}

\noindent {\it Proof of Theorem~\ref{mth04}.} Take any  complex-valued polynomial $P$ of degree at most $n$ on $\mathbb{R}^{k}$ and $z\in\Omega_B$ and consider the measure $\mu$ supported on $\Omega_B$ given by Lemma \ref{step3}. Then, we have

\begin{align*}
\int_{\mathbb{R}^{k}} B&\left( |LP(x)| \frac{1}{|\mu|(\Omega_{B})} \right) \diff\gamma_k(x)  \stackrel{\eqref{eq:measure}}{=} \int_{\mathbb{R}^{k}} B\left( \left| \int_{\Omega_{B}} T_{z}P(x)  \frac{\diff\mu(z)}{|\mu|(\Omega_{B})} \right|\right) \diff\gamma_k(x) \\
& \leq \int_{\mathbb{R}^{k}} B\left(  \int_{\Omega_{B}} \left|T_{z} P(x)\right| \, \frac{\diff|\mu|(z)}{|\mu|(\Omega_{B})} \right)\diff\gamma_k(x) \leq   \int_{\Omega_{B}} \int_{\mathbb{R}^{k}} B\left( \left| T_{z} P(x) \right|\right) \diff\gamma_k(x) \frac{\diff|\mu|(z)}{|\mu|(\Omega_{B})}  \\
& \stackrel{\eqref{hyp3}}{\leq} \int_{\Omega_{B}} \int_{\mathbb{R}^{k}} B\left( \left| P(x) \right|\right) \diff\gamma_k(x) \frac{d|\mu|(z)}{|\mu|(\Omega_{B})} =  \int_{\mathbb{R}^{k}} B\left( \left| P(x) \right|\right) \diff\gamma_k(x),
\end{align*}
where the second inequality follows from Jensen's inequality. After rescaling the coefficients of $P$ and using Lemma \ref{step3}, we deduce that the inequality
\begin{align*}
\int_{\mathbb{R}^{k}} B\left( |LP(x)|\right) \diff\gamma_k(x) \leq \int_{\mathbb{R}^{k}} B\left( 10 (\mathrm{deg}P)^{\alpha_{B}}|P(x)|\right) \diff\gamma_k(x)
\end{align*} 
holds true for all polynomials $P$ on $\mathbb{R}^{k}$. \hfill$\Box$

\medskip

We can now easily deduce Theorem \ref{mth03}.

\begin{proof} [Proof of Theorem \ref{mth03}.]
Indeed, for $p>1$ we can choose $B(s) = s^{p}$  in Theorem~\ref{mth04}. Then 
$$\frac{sB''(s)}{B'(s)}+\frac{B'(s)}{sB''(s)} = \frac{(p-1)^{2}+1}{p-1}$$
and therefore 
\begin{align*}
\alpha_{B} = 1+\frac{2}{\pi}\arctan\left(\frac{|p-2|}{2\sqrt{p-1}}\right).
\end{align*}
Then, Theorem \ref{mth04} implies that for every $p>1$,
$$\|LP\|_{L^p(\diff\gamma_k)} \leq 10 (\mathrm{deg} P)^{1+\frac{2}{\pi}\arctan\big(\frac{|p-2|}{\sqrt{p-1}}\big)} \|P\|_{L^p(\diff\gamma_k)}$$
and letting $p\to1^+$ we also deduce the endpoint case
$$\|LP\|_{L^1(\diff\gamma_k)} \leq 10 (\mathrm{deg}P)^2 \|P\|_{L^1(\diff\gamma_k)},$$
which completes the proof of the theorem.
\end{proof}


\begin{remark}
To directly prove Theorem \ref{mth03}, one could refer to the classical complex hypercontractivity ~\cite{janson}  instead of  invoking Proposition~\ref{step1} in its full generality. 
\end{remark}

\subsection{The necessity of (\ref{inf1})}\label{neces}
In addition to (\ref{assump1}) let us require that for each point $t_{0}>0$ there exists $\delta = \delta(t_{0})$ such that 
\begin{align}\label{teilor}
R(t) = R(t_{0})+R'(t_{0})(t-t_{0})+\frac{R''(t_{0})}{2}(t-t_{0})^{2} + O(|t-t_{0}|^{3})
\end{align}
holds for all $t>0$ with $|t-t_{0}|<\delta(t_{0})$.  For example if $R \in C^{3}((0,\infty))$ then (\ref{teilor}) holds. 
In particular, this means that for fixed complex numbers $a,b \in \mathbb{C}$ with $a\neq 0$ the function $t \mapsto R(|a+bt|^{2})$ has the property (\ref{teilor}). 

Fix two complex numbers $a,b \in \mathbb{C}$ with $a\neq 0$, and consider a  linear function $Q(x) = a+b \varepsilon x$ on $\mathbb{R}$, where $\varepsilon>0$. Clearly $T_{z}Q(x) = a+b\varepsilon zx$. Since for any fixed $N>0$, any polynomial $P$, and any constant $C>0$ we have $ \int_{|\varepsilon x| >C} |P(x)|\diff\gamma_1(x) = O(\varepsilon^{N})$ as $\varepsilon \to 0$  we obtain that there exists a number $\delta=\delta(a,b,z)$ such that 
\begin{align*}
&\int_{\mathbb{R}}R(|a+\varepsilon bz x|^{2}) \diff\gamma_1(x)\\
& = \int_{|\varepsilon x|\leq \delta} R(|a|^{2})+R'(|a|^{2})2 \Re (\bar{a} bz) \varepsilon x + (R''(|a|^{2})2(\Re (\bar{a} bz))^{2} + R'(|a|^{2})|bz|^{2}) |\varepsilon x|^{2} \diff\gamma_1(x) + O(\varepsilon^{3})\\
&=R(|a|^{2}) + (R''(|a|^{2})2(\Re (\bar{a} bz))^{2} + R'(|a|^{2})|bz|^{2})\varepsilon^{2} + O(\varepsilon^{3}),
\end{align*}
as $\varepsilon$ goes to zero. Similarly, there exists $\tilde{\delta} = \tilde{\delta}(a,b)$ such that 
\begin{align*}
&\int_{\mathbb{R}}R(|a+\varepsilon bx|^{2}) \diff\gamma_1(x) \\
&= \int_{|\varepsilon x|\leq \tilde{\delta}} R(|a|^{2})+R'(|a|^{2})2 \Re (\bar{a} b) \varepsilon x + (R''(|a|^{2})2 (\Re (\bar{a} b))^{2} + R'(|a|^{2})|b|^{2}) |\varepsilon x|^{2} \diff\gamma_1(x) + O(\varepsilon^{3})\\
&=R(|a|^{2}) + (R''(|a|^{2})2(\Re (\bar{a} b))^{2} + R'(|a|^{2})|b|^{2})\varepsilon^{2} + O(\varepsilon^{3}), 
\end{align*}
as $\varepsilon\to0^+$. Using (\ref{hyp2}) we thus obtain 
\begin{align*}
R''(|a|^{2})2(\Re (\bar{a} bz))^{2} + R'(|a|^{2})|bz|^{2} \leq R''(|a|^{2})2(\Re (\bar{a} b))^{2} + R'(|a|^{2})|b|^{2}.
\end{align*}
Denoting $w=\bar{a} b$ and $|a|^{2}=x>0$ the latter inequality, after multiplying the both sides by $|a|^{2}$,  takes the form 
\begin{align}\label{mokvda}
2xR''(x) (\Re (wz))^{2} + R'(x)|wz|^{2} \leq 2x R''(x)(\Re w)^{2} + R'(x)|w|^{2}.
\end{align}
Since by changing $a\neq 0$ we can make  $x$ to be an arbitrary positive number, and  by changing $b$ we can make $w$ to be an arbitrary complex number,   we see that (\ref{mokvda}) coincides with (\ref{inf1}). By continuity (\ref{mokvda}) holds also for $x=0$. This proves the equivalence between (\ref{inf1}) and (\ref{hyp1}).  \hfill$\Box$

\section{Proof of Theorem~\ref{mth02}} \label{sec4}

Recall that $L = \Delta -x\cdot \nabla$ satisfies $L H_{\alpha} = -|\alpha| H_{\alpha}$. Define $(-L)^{1/2} H_{\alpha} = |\alpha|^{1/2} H_{\alpha}$ and extend it linearly to all polynomials $P$ on $\mathbb{R}^{k}$. 
First we need the following lemma from \cite[Lemma~5.6]{LP}. Since the argument is simple, we include the proof for the readers' convenience. 

\begin{lemma}
For any $ p \geq 1$, any $k\geq 1$, and all polynomials $P$ on $\mathbb{R}^{k}$ we have 
\begin{align}\label{lustp}
\| (-L)^{1/2} P \|_{L^{p}(\diff\gamma_k)} \leq 2 \| P \|^{1/2}_{L^{p}(\diff\gamma_k)} \| L P \|^{1/2}_{L^{p}(\diff\gamma_k)}.
\end{align}
\end{lemma}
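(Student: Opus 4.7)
The plan is to use a subordination identity to express $(-L)^{1/2}$ in terms of the Ornstein--Uhlenbeck semigroup $e^{tL}$, then split the resulting integral at a well-chosen threshold and balance the two pieces. Concretely, the starting point is the elementary formula
$$\sqrt{\lambda} \;=\; \frac{1}{2\sqrt{\pi}}\int_0^\infty s^{-3/2}\bigl(1-e^{-s\lambda}\bigr)\,ds \qquad (\lambda\geq 0),$$
which one verifies by integration by parts using $\int_0^\infty s^{-1/2}e^{-s\lambda}\,ds=\sqrt{\pi/\lambda}$. Since $-L$ is positive and diagonalized by the Hermite basis $\{H_\alpha\}$, applying the identity spectrally gives the operator representation
$$(-L)^{1/2}P \;=\; \frac{1}{2\sqrt{\pi}}\int_0^\infty s^{-3/2}\bigl(P - e^{sL}P\bigr)\,ds$$
for every polynomial $P$ (the integrand is, for each $s$, a finite linear combination of Hermite polynomials, and integrability in $L^p(\diff\gamma_k)$ will follow from the bounds below).

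Next, I would fix $T>0$ to be optimized and split the integral at $T$. For the small-$s$ piece, the smoothness of $P$ together with $\tfrac{d}{du}e^{uL}P = L e^{uL}P$ yields
$$P - e^{sL}P \;=\; -\int_0^s e^{uL}(LP)\,du,$$
and the $L^p(\diff\gamma_k)$-contractivity of the Ornstein--Uhlenbeck semigroup (standard, since $e^{uL}$ is an average against a Gaussian kernel) gives the pointwise-in-$s$ bound $\|P-e^{sL}P\|_{L^p(\diff\gamma_k)}\leq s\|LP\|_{L^p(\diff\gamma_k)}$. For the large-$s$ piece, I would use the crude contractivity bound $\|P-e^{sL}P\|_{L^p(\diff\gamma_k)}\leq 2\|P\|_{L^p(\diff\gamma_k)}$.

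Combining these, the triangle inequality in $L^p(\diff\gamma_k)$ yields
$$\|(-L)^{1/2}P\|_{L^p(\diff\gamma_k)} \;\leq\; \frac{1}{2\sqrt{\pi}}\left(\|LP\|_{L^p(\diff\gamma_k)}\int_0^T s^{-1/2}\,ds \;+\; 2\|P\|_{L^p(\diff\gamma_k)}\int_T^\infty s^{-3/2}\,ds\right),$$
i.e., $\tfrac{1}{2\sqrt{\pi}}\bigl(2\sqrt{T}\,\|LP\|_{L^p(\diff\gamma_k)}+\tfrac{4}{\sqrt{T}}\|P\|_{L^p(\diff\gamma_k)}\bigr)$. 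Optimizing by setting $T=2\|P\|_{L^p(\diff\gamma_k)}/\|LP\|_{L^p(\diff\gamma_k)}$ produces the final constant $\tfrac{2\sqrt{2}}{\sqrt{\pi}}<2$, giving the stated inequality (\ref{lustp}). The one point requiring a little care is the justification of the $L^p$-valued integral representation of $(-L)^{1/2}P$, but for a polynomial $P$ this reduces to a finite sum of scalar identities via the Hermite expansion, so there is no genuine analytic obstacle; the core of the argument is just the two-scale split and optimization in $T$.
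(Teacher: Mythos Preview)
Your proof is correct and essentially identical to the paper's: both use the subordination identity $\sqrt{\lambda}=\frac{1}{2\sqrt{\pi}}\int_0^\infty s^{-3/2}(1-e^{-s\lambda})\,ds$, split the integral at a threshold, bound the small-$s$ part via $P-e^{sL}P=-\int_0^s e^{uL}LP\,du$ and semigroup contractivity, bound the large-$s$ part by $2\|P\|_{L^p(\diff\gamma_k)}$, and optimize in the splitting point to obtain the constant $\tfrac{2\sqrt{2}}{\sqrt{\pi}}<2$. The only cosmetic difference is notation: the paper writes $T_{e^{-t}}$ for your $e^{tL}$.
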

\begin{proof}
Let $C:=\int_{0}^{\infty} \frac{1-e^{-t}}{t^{3/2}} dt = 2\sqrt{\pi}$. Then for any $\lambda>0$ we have 
\begin{align*}
C \sqrt{\lambda}  = \int_{0}^{\infty} \frac{1-e^{-\lambda t}}{t^{3/2}} \diff t. 
\end{align*}
Therefore for any polynomial $P(x) = \sum_{|\alpha|\leq n} c_{\alpha} H_{\alpha}(x)$ and any number $M>0$ we have 
\begin{align*}
\big\| (-L)^{1/2} P & \big\|_{L^{p}(\diff\gamma_k)} = \frac{1}{C}\left\| \sum_{|\alpha|\leq n} c_\alpha \left( \int_{0}^{\infty} \frac{1-e^{-|\alpha| t}}{t^{3/2}} \diff t\right) H_{\alpha}  \right\|_{L^{p}(\diff\gamma_k)}\\
&=\frac{1}{C}\left\| \int_{0}^{M}  \int_{0}^{t} \left( \sum_{|\alpha|\leq n} c_\alpha  e^{-|\alpha|s}|\alpha|   H_{\alpha}\right)    \frac{\diff s \diff t}{t^{3/2}}  + \int_{M}^{\infty} \sum_{|\alpha|\leq n} c_\alpha (1-e^{-|\alpha|t})H_{\alpha} \frac{\diff t}{t^{3/2}}\right\|_{L^{p}(\diff\gamma_k)} \\
&\leq \frac{1}{C}\int_{0}^{M}\int_{0}^{t} \|T_{e^{-s}}L P\|_{L^{p}(\diff\gamma_k)} \frac{\diff s \diff t}{t^{3/2}} + \frac{1}{C} \int_{M}^{\infty} \|P-T_{e^{-t}}P\|_{L^{p}(\diff\gamma_k)} \frac{\diff t}{t^{3/2}} 
\\ & \leq \frac{2}{C}M^{1/2}\| LP\|_{L^{p}(\diff\gamma_k)} + \frac{4}{C} M^{-1/2} \|P\|_{L^{p}(\diff\gamma_k)},
\end{align*}
where we used twice the fact that the operator $T_{e^{-t}}$ is the contraction in $L^{p}(\diff\gamma_k)$ for every $t\geq0$. Finally choosing $M= 2 \|P\|_{L^{p}(\diff\gamma_k)} \|LP\|^{-1}_{L^{p}(\diff\gamma_k)}$, one arrives at \eqref{lustp}.
\end{proof}

To deduce Theorem \ref{mth02} from Theorem \ref{mth03}, we will need Meyer's Riesz transform inequalities in Gauss space ~\cite{M1} (see also \cite{MP2} for a simpler proof and \cite{G1} for a stochastic calculus approach).

\begin{theorem}[Meyer, \cite{M1}] For each $p \in (1,\infty)$  there exist finite constants $C_p, c_p>0$ such that, for any $k\geq 1$, and all polynomials $P$ on $\mathbb{R}^{k}$ we have 
\begin{align}\label{meyer1}
c_p \| (-L)^{1/2} P \|_{L^{p}(\diff\gamma_k)} \leq \|\nabla P\|_{L^{p}(\diff\gamma_k)} \leq C_p \| (-L)^{1/2} P \|_{L^{p}(\diff\gamma_k)}.
\end{align}
\end{theorem}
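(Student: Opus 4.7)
The plan is to follow Pisier's analytic approach from \cite{MP2}, which fits naturally with the rotation-based techniques used elsewhere in this paper. The argument unfolds in three steps: subordination and commutation reduce the problem to $L^p$-boundedness of vector-valued Riesz transforms $R_j = \partial_j(-L)^{-1/2}$; a rotation trick expresses these as a singular integral in an angular variable; and duality delivers the lower bound.

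First I would record the commutation $\partial_j T_r = r T_r \partial_j$ (immediate from $\partial_j H_\alpha = \alpha_j H_{\alpha - e_j}$ and the definition of the Mehler transform) and combine it with the subordination formula
\begin{equation*}
(-L)^{-1/2} = \frac{1}{\sqrt{\pi}}\int_0^\infty T_{e^{-t}}\,\frac{\diff t}{\sqrt{t}},
\end{equation*}
valid on polynomials orthogonal to constants (the scalar identity $\lambda^{-1/2} = \pi^{-1/2}\int_0^\infty e^{-\lambda t} t^{-1/2}\diff t$ applied via the spectral decomposition of $-L$). Together these yield the representation
\begin{equation*}
R_j Q(x) = \partial_j(-L)^{-1/2} Q(x) = \frac{1}{\sqrt{\pi}}\int_0^\infty e^{-t}\, T_{e^{-t}}(\partial_j Q)(x)\,\frac{\diff t}{\sqrt{t}},
\end{equation*}
so that setting $Q = (-L)^{1/2} P$ reduces the upper bound of \eqref{meyer1} to the vector-valued estimate $\|(\sum_j |R_j Q|^2)^{1/2}\|_{L^p(\diff\gamma_k)} \leq C_p \|Q\|_{L^p(\diff\gamma_k)}$.

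To prove this vector-valued bound, I would use the rotation trick central to Section~\ref{sec2}. With $X, Y$ independent $\mathcal{N}(0,\mathrm{Id}_k)$ vectors, Mehler's formula $T_{\cos\theta} f(X) = \mathbb{E}_Y[f(X\cos\theta + Y\sin\theta)]$ together with the substitution $e^{-t} = \cos\theta$ converts $R_j Q(X)$ into an integral in $\theta\in(0,\pi/2)$ of $\partial_j Q$ evaluated on the rotated vector $X\cos\theta + Y\sin\theta$. Since the rotation $(X,Y)\mapsto (X\cos\theta + Y\sin\theta,\, -X\sin\theta + Y\cos\theta)$ preserves $\gamma_k\otimes\gamma_k$, one may transfer the $L^p$ computation to a convolution-type singular integral in the variable $\theta$ on the circle, which is controlled by the periodic Hilbert transform with a constant that depends only on $p$. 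This is the hardest step of the proof: the Calder\'on--Zygmund/Fourier multiplier argument must be arranged so that the resulting constant is manifestly dimension-free.

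For the lower bound, I would close by a duality identity. Integration by parts in $L^2(\diff\gamma_k)$ shows $\partial_j^* = x_j - \partial_j$, hence $\sum_j \partial_j^*\partial_j = -L$ and consequently $\sum_j R_j^* R_j = I$ on mean-zero polynomials, where $R_j^* = (-L)^{-1/2}(x_j-\partial_j)$. Applying the upper bound at the dual exponent $p'$ gives boundedness of the adjoint operator $(f_j)_j \mapsto \sum_j R_j^* f_j$ from $L^p(\diff\gamma_k;\ell^2)$ to $L^p(\diff\gamma_k)$; feeding in $f_j = \partial_j P$ yields
\begin{equation*}
\|(-L)^{1/2} P\|_{L^p(\diff\gamma_k)} = \Big\|\sum_{j=1}^k R_j^*\,\partial_j P\Big\|_{L^p(\diff\gamma_k)} \leq C_{p'}\,\|\nabla P\|_{L^p(\diff\gamma_k)},
\end{equation*}
which is the desired lower bound (the case of constants is trivial since both sides vanish).
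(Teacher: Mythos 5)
This theorem is not proved in the paper: it is quoted from Meyer \cite{M1}, and the authors simply point the reader to Pisier's analytic proof \cite{MP2} and Gundy's stochastic one \cite{G1}. Your sketch follows the Pisier route, and the skeleton you write down is correct: the subordination identity $(-L)^{-1/2}=\pi^{-1/2}\int_0^\infty T_{e^{-t}}\,t^{-1/2}\diff t$ on mean-zero polynomials, the commutation $\partial_j T_r = r\,T_r\partial_j$, the resulting reduction to the vector-valued Riesz transform $R_j=\partial_j(-L)^{-1/2}$, Mehler's formula with the substitution $e^{-t}=\cos\theta$, and the duality identity $\sum_j R_j^*R_j=\mathrm{Id}$ feeding the upper bound at $p'$ back as the lower bound at $p$. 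All of that is right and is indeed what \cite{MP2} does.

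The middle step---precisely the one you flag as ``the hardest''---is where the content lives, and your description elides two ingredients without which the argument does not close. First, after Mehler's formula the integrand is $(\partial_j Q)(X\cos\theta+Y\sin\theta)$ against the kernel $\sin\theta/\sqrt{-\ln\cos\theta}\,\diff\theta$, which is \emph{bounded} near $\theta=0$; no singular integral is visible yet. The Hilbert-transform structure only appears after a Gaussian integration by parts in $Y$, which replaces $(\partial_j Q)(X\cos\theta+Y\sin\theta)$ by $(\sin\theta)^{-1}\,Y_j\,Q(X\cos\theta+Y\sin\theta)$ and turns the kernel into $1/\sqrt{-\ln\cos\theta}\sim\sqrt2/\theta$ (a principal-value integral, since $\mathbb{E}_Y[Y_jQ]=0$ at $\theta=0$). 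Second, and more seriously, the rotation argument as stated controls each scalar operator $R_j$ but says nothing about the $\ell^2$-sum $\bigl(\sum_j|R_jQ|^2\bigr)^{1/2}$ with a constant independent of $k$. Pisier obtains dimension-freeness by contracting the Riesz vector against a fresh standard Gaussian $\xi\in\R^k$: one studies the scalar function $\langle\xi,RQ\rangle=\partial_\xi(-L)^{-1/2}Q$, the $\xi$-variable couples with $Y$ through $\langle\xi,Y\rangle$ in the integration-by-parts formula, and the $\ell^2$-norm is then recovered from the identity $\bigl(\mathbb{E}_\xi|\langle\xi,v\rangle|^p\bigr)^{1/p}=c_p\,|v|$. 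Absent this device, summing the scalar bounds over $j=1,\dots,k$ would produce a constant that grows with $k$, which is exactly what the theorem forbids. These two steps (together with the symmetrization that extends $\theta$ to a symmetric interval, makes the kernel odd, and compares it to the circular Hilbert transform kernel) are what would have to be supplied to turn your outline into a proof.
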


We can now prove Theorem \ref{mth02}.

\medskip

\noindent {\it Proof of Theorem \ref{mth02}.} Let $P$ be any polynomial on $\mathbb{R}^k$. Then, we have
\begin{align*}
\|\nabla P\|_{L^{p}(\diff\gamma_k)} & \stackrel{(\ref{meyer1})}{\leq} C_p \| (-L)^{1/2} P \|_{L^{p}(\diff\gamma_k)} \stackrel{(\ref{lustp})}{\leq}2C_p  \| P \|^{1/2}_{L^{p}(\diff\gamma_k)} \| L P \|^{1/2}_{L^{p}(\diff\gamma_k)} 
\\ & \stackrel{\eqref{Lmarkov}}{\leq} 2\sqrt{10} C_p (\mathrm{deg}\, P)^{\frac{1}{2} + \frac{1}{\pi} \arctan\big(\frac{|p-2|}{\sqrt{p-1}}\big)} \|P\|_{L^{p}(\diff\gamma_k)},
\end{align*}
and the proof is complete. \hfill$\Box$

\begin{remark} 
When $p \to \infty$ the constant $C_p$, which comes from the boundedness of the Riesz transforms \eqref{meyer1}, goes to infinity. Therefore, for large enough values of $p$ and polynomials $P$ of small enough degree, the bound \eqref{trick1} is better than \eqref{mth01}.
\end{remark}

\section{Proof of Proposition~\ref{best}}\label{mult}

In this section, we prove the sharp high dimensional $L^\infty$ version of Freud's inequality, Proposition \ref{best}. The proof is an adaptation of the argument that Freud and Nevai \cite{FrNev} have used for the real line (see also further refinements of this technique in \cite{LevLub,LevLub2}).

\medskip

\noindent {\it Proof of Proposition~\ref{best}.} Let us denote $\|f\|_{L^{\infty}(\Omega)} = \mathrm{esssup}_{x \in \Omega} |f(x)|$ and $W_{k}(x):=e^{-|x|^{2}}$, the rescaled density of the Gaussian measure on $\mathbb{R}^{k}$. Take any polynomial $P$ on $\mathbb{R}^{k}$ of degree $n$ and write $a_n=\sqrt{n/2}$. It was shown in \cite{FrNev} (see also \cite[Section~8.2]{Lub1}) that there exists a universal constant $C>0$ and a polynomial  $S_{n}$ on the real line of degree at most $Cn$ such that  
\begin{align*}
&C^{-1} W_1(x) \leq S_n(x) \leq CW_1(x),   \quad \text{for} \ |x|\leq 2a_{n}
\end{align*}
and
\begin{align*}
&|S_n'(x)| \leq C \sqrt{n} W_1(x), \quad \text{for} \ |x|\leq a_{n}.
\end{align*}
In fact, one can take $S_n$ to be the partial sums of the Taylor's series for $W_{1}(x)$ of  order $Cn$. Clearly this polynomial is even because $W_1$ is so, therefore, the function $\rho_{n}(x)=S_{n}(|x|)$ is also a polynomial on $\mathbb{R}^k$. Taking into account that $W_k(x)=W_1(|x|)$ and that $|\nabla \rho_n(x)| = |S_n'(|x|)|$, we conclude that the estimates  
\begin{align}
&C^{-1}W_k(x) \leq \rho_n(x) \leq C W_k(x)  \quad \text{for} \ x \in B(2a_{n})\label{raz}
\end{align}
and
\begin{align}
&|\nabla \rho_n(x)| \leq C \sqrt{n} W_k(x) \quad \text{on} \quad x \in B(a_n),\label{dva}
\end{align}
also hold true, where $B(r)$ denotes the closed ball of radius $r$ centered at the origin in $\mathbb{R}^{k}$. 

Next, we will need the following well-known restricted range inequality, which follows, e.g., from \cite[Theorem~1.8]{LevLub4}. For any polynomial $P$ on $\mathbb{R}^k$ of degree at most $n$, we have
\begin{align}\label{mnogom}
\|P W_{k} \|_{L^{\infty}(\mathbb{R}^{k}\setminus B(a_{n}))} \leq  \|P W_{k} \|_{L^{\infty}(B(a_{n}))}.
\end{align}
In \cite[Theorem~1.8]{LevLub4} (see also \cite[Theorem~6.2]{Lub1}), inequality \eqref{mnogom} is stated for $k=1$, i.e., for any polynomial $G$ of degree at most $n$ on $\mathbb{R}$ we have 
\begin{align}\label{odn}
\|G W_{1} \|_{L^{\infty}(\mathbb{R}\setminus [-a_{n},a_{n}])} \leq  \|G W_{1} \|_{L^{\infty}([-a_{n},a_{n}])}.
\end{align}
To deduce \eqref{mnogom}, it suffices to take an arbitrary unit vector $v$ in $\mathbb{R}^{k}$, and apply (\ref{odn}) to $G(t)=P(vt)$. Thus, it follows that for any polynomial $P$ on $\mathbb{R}^k$ of degree at most $n$,
\begin{align} \label{mnogom2}
\|P W^{2}_{k} \|_{L^{\infty}(\mathbb{R}^{k}\setminus B(a_{n}/\sqrt{2}))} \leq  \|P W^{2}_{k} \|_{L^{\infty}(B(a_{n}/\sqrt{2}))}.
\end{align}
Since $|\nabla P |^{2}$ is a polynomial of degree at most $2n$ and $a_{2n}=\sqrt{2}a_{n}$,  we have 
\begin{align*}
\||\nabla P| \; W_{k} \|_{L^{\infty}(\mathbb{R}^{k})} & \stackrel{\eqref{mnogom2}}{\leq}  \| |\nabla P| \; W_{k}\|_{L^{\infty}(B(a_{2n}/\sqrt{2}))}  =\| |\nabla P| \; W_{k}\|_{L^{\infty}(B(a_{n}))}   
 \\ & \stackrel{\eqref{raz}}{\leq} C \||\nabla P| \; \rho_{n}\|_{L^{\infty}(B(a_{n}))} \leq C(\|\nabla (P \rho_{n})\|_{L^{\infty}(B(a_{n}))} +\|P |\nabla \rho_{n}|\|_{L^{\infty}(B(a_{n}))}) \\
& \stackrel{\eqref{harris}\wedge\eqref{dva}}{\leq} C\left[ \frac{Bn}{\sqrt{n}}\|P \rho_{n}\|_{L^{\infty}(B(2a_{n}))} +C \sqrt{n}\|P W_{k}\|_{L^{\infty}(B(a_{n}))}\right] \leq A \sqrt{n} \|P W_{k}\|_{L^{\infty}(\mathbb{R}^{k})},
\end{align*}
for some universal constants $A,B>0$. Here, we also used multidimensional Bernstein inequality 
\begin{equation} \label{harris}
\|\nabla P\|_{L^{\infty}(B(R))} \leq   \frac{Bd}{R} \|P\|_{L^{\infty}(B(2R))},
\end{equation}
of Harris \cite{Harris} (see also ~\cite{SAR}), where $B>0$ is a universal constant. Finally making the change of variables $x = y/\sqrt{2}$, and dividing of both sides of the one but last inequality by $\sqrt{(2\pi)^{k}}$ we obtain the estimate
\begin{align*}
\left\| |\nabla P(x)| \frac{e^{-|x|^{2}/2}}{\sqrt{(2\pi)^{k}}}\right\|_{L^{\infty}(\mathbb{R}^{k})} \leq C \sqrt{n} \left\| P(x) \frac{e^{-|x|^{2}/2}}{\sqrt{(2\pi)^{k}}}\right\|_{L^{\infty}(\mathbb{R}^{k})}
\end{align*}
for a universal constant $C>0$ and all polynomials $P$ on $\mathbb{R}^k$ of degree at most $n$. This finishes the proof of Proposition~\ref{best}. \hfill$\Box$

\end{document}